\newtheorem{thm}{\sc Theorem}[section]
\newtheorem{prop}[thm]{\sc Proposition}
\newtheorem{cor}[thm]{\sc Corollary}
\theoremstyle{definition}\newtheorem{exa}[thm]{\sc Example}
\theoremstyle{definition}\newtheorem{de}[thm]{\sc Definition}
\theoremstyle{definition}\newtheorem{rem}[thm]{\sc Remark}
\theoremstyle{definition}
\theoremstyle{definition}
\theoremstyle{definition}
\numberwithin{equation}{section}
\DeclareMathOperator{\C}{\mathbb C}
\let\chitalicit\relax\fi
\let\chbfit\relax\fi
\begin{document}
\title[Nonnegativity Criterion for Paneitz operators]{Embedded three-dimensional CR manifolds and the Non-negativity of Paneitz Operators}
\author[S.Chanillo, H.-L. Chiu, and P. Yang]{SAGUN CHANILLO, HUNG-LIN CHIU AND PAUL YANG}
\address{S. Chanillo, Department of Mathematics, Rutgers University, 110 Frelinghuysen Rd., Piscataway, NJ 08854, U.S.A.}
\email{chanillo@math.rutgers.edu}

\address{H.-L. Chiu, Department of Mathematics, National Central University, Chung Li, 32054, Taiwan, R.O.C.}
\email{hlchiu@math.ncu.edu.tw}

\address{P. Yang, Department of Mathematics, Princeton University, Princeton, NJ 08544, U.S.A.}
\email{Yang@math.princeton.edu}
\keywords{}\subjclass{Primary 32V05, 32V20; Secondary 53C56.
{\color{red} v19}}
\renewcommand{\subjclassname}{\textup{2000} Mathematics Subject
Classification}

\begin{abstract}
Let $\Omega\subset\mathbb{C}^{2}$ be a strictly pseudoconvex domain
and $M=\partial\Omega$ be a smooth, compact and connected CR
manifold embedded in $\mathbb{C}^2$ with the CR structure induced
from $\mathbb{C}^{2}$. The main result proved here is as follows.
Assume the CR structure of $M$ has zero torsion. Then if we make a
small real-analytic deformation of the CR structure of $M$ along
embeddable directions, the CR structures along the deformation path
continue to have non-negative Paneitz operators. We also show that
any ellipsoid in $\mathbb{C}^2$ has positive Webster curvature.
\end{abstract}

\maketitle
\section{introduction}

Throughout this paper, we will use the notation and terminology in
(\cite{Le}) unless otherwise specified. Let $(M,J,\theta)$ be a
smooth, closed and connected three-dimensional pseudo-hermitian
manifold, where $\theta$ is a contact form and $J$ is a CR structure
compatible with the contact bundle $\xi=\ker\theta$. The CR
structure $J$ decomposes $\bf \C\otimes\xi$ into the direct sum of
$T_{1,0}$ and $T_{0,1}$ which are eigenspaces of $J$ with respect to
$i$ and $-i$, respectively. The Levi form $\left\langle\  ,\
\right\rangle_{L_\theta}$ is the Hermitian form on $T_{1,0}$ defined
by $\left\langle Z,W\right\rangle_{L_\theta}=-i\left\langle
d\theta,Z\wedge\overline{W}\right\rangle$. We can extend
$\left\langle\  ,\ \right\rangle_{L_\theta}$ to $T_{0,1}$ by
defining $\left\langle\overline{Z}
,\overline{W}\right\rangle_{L_\theta}=\overline{\left\langle
Z,W\right\rangle}_{L_\theta}$ for all $Z,W\in T_{1,0}$. The Levi
form induces a natural Hermitian form on the dual bundle of
$T_{1,0}$, denoted by $\left\langle\  ,\
\right\rangle_{L_\theta^*}$, and hence on all the induced tensor
bundles. Integrating the hermitian form (when acting on sections)
over $M$ with respect to the volume form $dV=\theta\wedge d\theta$,
we get an inner product on the space of sections of each tensor
bundle. We denote the inner product by the notation $\left\langle \
,\ \right\rangle$. For example
\begin{equation}\label{21}
\left\langle\varphi ,\psi\right\rangle=\int_{M}\varphi\bar{\psi}\ dV,
\end{equation}
for functions $\varphi$ and $\psi$.

Let $\left\{T,Z_1,Z_{\bar{1}}\right\}$ be a frame of $TM\otimes \bf C$, where $Z_1$ is any local frame of $T_{1,0},\  Z_{\bar{1}}=\overline{Z_1}\in T_{0,1}$
and $T$ is the characteristic vector field, that is, the unique vector field such that $\theta(T)=1,\ d\theta(T,\cdot)=0$.
Then $\left\{\theta,\theta^1,\theta^{\bar 1}\right\}$, the coframe dual to $\left\{T,Z_1,Z_{\bar{1}}\right\}$, satisfies
\begin{equation}\label{22}
d\theta=ih_{1\bar 1}\theta^1\wedge\theta^{\bar 1}
\end{equation}
for some positive function $h_{1\bar 1}$. We can always choose $Z_1$
such that $h_{1\bar 1}=1$; hence, throughout this paper, we assume
$h_{1\bar 1}=1$

  The pseudohermitian connection of $(J,\theta)$ is the connection
$\nabla$ on $TM\otimes \bf C$ (and extended to tensors) given in terms of a local frame $Z_1\in T_{1,0}$ by

\begin{equation*}
\nabla Z_1=\theta_1{}^1\otimes Z_1,\quad
\nabla Z_{\bar{1}}=\theta_{\bar{1}}{}^{\bar{1}}\otimes Z_{\bar{1}},\quad
\nabla T=0,
\end{equation*}

where $\theta_1{}^1$ is the $1$-form uniquely determined by the following equations:

\begin{equation}\label{id10}
\begin{split}
d\theta^1&=\theta^1\wedge\theta_1{}^1+\theta\wedge\tau^1\\
\tau^1&\equiv 0\mod\theta^{\bar 1}\\
0&=\theta_1{}^1+\theta_{\bar{1}}{}^{\bar 1},
\end{split}
\end{equation}
where $\theta_{1}{}^{1}$ and $\tau^1$ are called the connection form
and the pseudohermitian torsion, respectively. Set
$\tau^1=A^1{}_{\bar 1}\theta^{\bar 1}$. The structural equation for
the pseudohermitian connection is given by,

\begin{equation}\label{strueq}
d\theta_{1}{}^{1}=Rh_{1\bar 1}\theta^{1}\wedge\theta^{\bar 1}+A_{1}{}^{\bar 1}{}_{,\bar 1}\theta^{1}\wedge\theta
-A_{\bar 1}{}^{1}{}_{,1}\theta^{\bar 1}\wedge\theta.
\end{equation}

where $R$ is the Tanaka-Webster curvature, see \cite{W}.

We will denote components of covariant derivatives with indices preceded by a comma; thus we write $A^{\bar{1}}{}_{1,\bar{1}}\theta^1\wedge\theta$.
The indices $\{0, 1, \bar{1}\}$ indicate derivatives with respect to $\{T, Z_1, Z_{\bar{1}}\}$.
For derivatives of a scalar function, we will often omit the comma,
for instance, $\varphi_{1}=Z_1\varphi,\  \varphi_{1\bar{1}}=Z_{\bar{1}}Z_1\varphi-\theta_1^1(Z_{\bar{1}})Z_1\varphi,\  \varphi_{0}=T\varphi$ for a (smooth) function.

Next we introduce several natural differential operators occuring in
this paper. For a detailed description, we refer the reader to the
article \cite{Le}. For a smooth function $\varphi$, the
Cauchy-Riemann operator $\partial_{b}$ can be defined locally by
\[\partial_{b}\varphi=\varphi_{1}\theta^{1},\]
and we write $\bar{\partial}_{b}$ for the conjugate of
$\partial_{b}$. A function $\varphi$ is called CR holomorphic if
$\bar{\partial}_{b}\varphi=0$. The divergence operator $\delta_b$
takes $(1,0)$-forms to functions by
$\delta_b(\sigma_{1}\theta^{1})=\sigma_{1,}{}^{1}$, and similarly,
$\bar{\delta}_b(\sigma_{\bar 1}\theta^{\bar 1})=\sigma_{\bar
1,}{}^{\bar 1}$.

If $\sigma =\sigma_{1}\theta^{1} $ is compactly supported, Stokes' theorem applied to the 2-form $\theta\wedge\sigma $ implies the divergence formula:
\[\int_{M}\delta_{b}\sigma \theta\wedge d\theta=0.\]
It follows that the formal adjoint of $\partial_{b}$ on functions
with respect to the Levi form and the volume element $\theta\wedge
d\theta$ is $\partial_{b}^{*}=-\delta_{b}$. The Kohn Laplacian on
functions is given by the expression,
\[\Box_{b}=2\bar{\partial}_{b}^{*}\bar{\partial_{b}}.\]

Define
\begin{equation}\label{opP3}P_3\varphi=(\varphi_{\bar{1}}{}^{\bar{1}}{}_1+iA_{11}\varphi^1)\theta^1\end{equation}
(see \cite{Le}) which is an operator whose vanishing  characterizes
CR-pluriharmonic functions.

We also define.
$\overline{P}_3\varphi=(\varphi_{1}{}^{1}{}_{\bar{1}}-iA_{\bar{1}\bar{1}}\varphi^{\bar{1}})\theta^{\bar{1}}$,
the conjugate of $P_3$.

\begin{de}The CR Paneitz operator $P_4$ is defined by
\begin{equation}\label{opP0}P_4\varphi=\delta_b(P_3\varphi).\end{equation}\end{de}
More explicitly,
define $Q$ by $Q\varphi=2i(A^{11}\varphi_{1})_{,1}$, then
\begin{equation*}
\begin{split}
P_{4}\varphi&=\frac{1}{4}(\Box_{b}\overline{\Box}_{b}-2Q)\varphi\\
&=\frac{1}{4}(\Box_{b}\overline{\Box}_{b}\varphi-4i(A^{11}\varphi_{1})_{1})\\
&=\frac{1}{8}\big((\overline{\Box}_{b}\Box_{b}+\Box_{b}\overline{\Box}_{b})\varphi+8Im(A^{11}\varphi_{1})_{1}\big).
\end{split}
\end{equation*}
By the commutation relation $[\Box_{b},\overline{\Box}_{b}]=4iImQ$,
we see that
$4P_{4}=\Box_{b}\overline{\Box}_{b}-2Q=\overline{\Box}_{b}\Box_{b}-2\overline{Q}$.
It follows that $P_{4}$ is a real and symmetric operator (see
\cite{C} for details).

\begin{de}
We say the Paneitz operator $P_{4}$ is nonnegative if and only if
\[\int_{M}(P_{4}\varphi)\bar{\varphi}\geq 0,\]
for all smooth functions $\varphi$. We use the notation $P_{4}\geq
0$ to denote non-negative Paneitz operators.
\end{de}
Note that the nonnegativity of $P_{4}$ is a CR invariant in the
sense that it is independent of the choice of the contact form
$\theta$. This follows by observing that if
$\widetilde{\theta}=e^{2f}\theta$ be another contact form, we have
the following transformation laws for the volume form and the CR
Paneitz operator respectively (see Lemma 7.4 in \cite{H}):
\[\widetilde{\theta}\wedge d\widetilde{\theta}=e^{4f}\theta\wedge d\theta;\ \ \ \ \ \  \widetilde{P_{4}}=e^{-4f}P_{4}.\]

In the higher dimensional case, there exists an analog of $P_{4}$
which satisfies the covariant property. In this case, Graham and
Lee, in \cite{GL}, had shown the nonnegativity of $P_{4}$. To be
specific, non-negativity of $P_4$ is a condition in dimension three
but it is a given in higher dimensions. Moreover the invariance
property for the Paneitz discussed above does not hold in dimensions
five and higher.

We will restrict ourselves exclusively to the three dimensional case
in our paper. We next observe that when the Webster torsion
$A_{11}\equiv 0$, then the Paneitz operator $P_{4}$ is given by,

\begin{equation}\label{zerotor}P_{4}=\frac{1}{4}\Box_{b}\overline{\Box}_{b}.\end{equation}

It follows that the vanishing of torsion implies that $P_{4}\geq 0$.
This is because when the torsion vanishes identically, the two
operators $\Box_{b}$ and $\overline{\Box}_{b}$ commute, and hence
are simultaneously diagonalizable on each eigenspace of $\Box_{b}$
of a nonzero eigenvalue(see \cite{C}). We also recall that the
vanishing of torsion is equivalent to $L_{T}J=0$ where $L$ is the
Lie derivative, see \cite{W}. We summarize a part of the facts above
as a proposition, which will prove useful later.
\begin{prop}\label{pospan}Let the Webster torsion tensor identically vanish, i.e. $A^{11}\equiv 0$. Then,
 \begin{equation}\label{kernelpan} \ker P_4=\ker P_3=\mbox{CR-pluriharmonic
 functions.}\end{equation}
   \begin{equation}\label{pospantor}P_4\geq 0.\end{equation}
 \end{prop}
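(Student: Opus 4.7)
The plan is to use the factorization \eqref{zerotor}, $P_{4}=\tfrac{1}{4}\Box_{b}\overline{\Box}_{b}$, combined with two elementary facts: both $\Box_{b}=2\bar{\partial}_{b}^{\ast}\bar{\partial}_{b}$ and $\overline{\Box}_{b}=2\partial_{b}^{\ast}\partial_{b}$ are formally self-adjoint nonnegative operators on $L^{2}(M,dV)$, and under $A^{11}\equiv 0$ they commute (as noted in the paragraph preceding \eqref{zerotor}). Applying the spectral theorem to the commuting pair on the compact manifold $M$, one obtains an $L^{2}$-orthonormal basis $\{\varphi_{\lambda,\mu}\}$ with $\Box_{b}\varphi_{\lambda,\mu}=\lambda\varphi_{\lambda,\mu}$, $\overline{\Box}_{b}\varphi_{\lambda,\mu}=\mu\varphi_{\lambda,\mu}$, and $\lambda,\mu\ge 0$. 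Then $P_{4}\varphi_{\lambda,\mu}=\tfrac{\lambda\mu}{4}\varphi_{\lambda,\mu}$, so for $\varphi=\sum c_{\lambda,\mu}\varphi_{\lambda,\mu}$ Plancherel yields $\langle P_{4}\varphi,\varphi\rangle=\tfrac{1}{4}\sum\lambda\mu|c_{\lambda,\mu}|^{2}\ge 0$, establishing \eqref{pospantor}.

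For \eqref{kernelpan}, the inclusion $\ker P_{3}\subset\ker P_{4}$ is immediate from $P_{4}=\delta_{b}\circ P_{3}$. Conversely, if $P_{4}\varphi=0$ then $c_{\lambda,\mu}=0$ whenever $\lambda\mu\ne 0$, so $\varphi=f+\bar{g}$ with $f\in\ker\Box_{b}$ (that is, $f$ CR-holomorphic) and $\bar{g}\in\ker\overline{\Box}_{b}$ (equivalently, $g$ CR-holomorphic). In the zero-torsion case the definition of $P_{3}$ collapses to $P_{3}\psi=\psi_{\bar{1}}{}^{\bar{1}}{}_{1}\theta^{1}$, so $P_{3}f=0$ follows immediately from $f_{\bar{1}}=0$. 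For $\bar{g}$, starting from $\bar{g}_{1}\equiv 0$, the torsion-free commutation identities $\varphi_{1,\bar{1}}-\varphi_{\bar{1},1}=i\varphi_{0}$ and $\varphi_{0,1}=\varphi_{1,0}$ (both of which require $A^{11}\equiv 0$) give $\bar{g}_{\bar{1},1}=-i\bar{g}_{0}$ and then $\bar{g}_{\bar{1},1,1}=-i\bar{g}_{0,1}=-i\bar{g}_{1,0}=0$, so $P_{3}\bar{g}=0$ as well. Hence $\ker P_{4}$ consists precisely of sums of CR-holomorphic and anti-CR-holomorphic functions, i.e.\ the space of CR-pluriharmonic functions, which coincides with $\ker P_{3}$ by the characterization of $P_{3}$ recorded after \eqref{opP3}.

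The delicate step is the simultaneous spectral resolution, since $\Box_{b}$ is subelliptic rather than elliptic, so one must invoke either Kohn's closed-range results or the Friedrichs extension to legitimize the spectral theorem for the pair. A clean self-contained alternative that avoids producing an explicit eigenbasis uses only the functional calculus for commuting nonnegative self-adjoint operators, whose square roots commute: this gives $\Box_{b}\overline{\Box}_{b}=(\Box_{b}^{1/2}\overline{\Box}_{b}^{1/2})^{2}$ and hence $\langle P_{4}\varphi,\varphi\rangle=\tfrac{1}{4}\|\Box_{b}^{1/2}\overline{\Box}_{b}^{1/2}\varphi\|^{2}\ge 0$; the equality case then forces $\Box_{b}\overline{\Box}_{b}\varphi=0$ in $L^{2}$, from which the same algebraic calculation above identifies $\varphi$ with a CR-pluriharmonic function.
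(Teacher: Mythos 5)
Your proof follows the paper's own route: under zero torsion $P_4=\tfrac14\Box_b\overline{\Box}_b$, and non-negativity is obtained from the simultaneous diagonalization (equivalently, the functional calculus for the commuting non-negative self-adjoint operators $\Box_b$ and $\overline{\Box}_b$), which is exactly the argument the paper sketches with its reference to \cite{C}. Your further step --- splitting $\ker P_4$ into $\ker\Box_b+\ker\overline{\Box}_b$ and using the zero-torsion commutation identities to check that CR and anti-CR functions lie in $\ker P_3$ --- correctly supplies the proof of \eqref{kernelpan}, which the paper states without a written argument, and your closing caveat about justifying the joint spectral resolution for the subelliptic pair is exactly the technical point deferred to \cite{C} and \cite{Ko}.
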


It remains an interesting problem to determine the precise
geometrical condition under which the kernel of the Paneitz operator
is exactly the pluri-harmonic functions or even a direct sum of a
finite dimensional subspace with the pluri-harmonic functions.

\begin{de}
Suppose that $\widetilde{\theta}=e^{2f}\theta$. The CR Yamabe constant is defined by\\
$\inf_{\widetilde{\theta}}{\{\int_{M}\widetilde{R}\ \widetilde{\theta}\wedge d\widetilde{\theta} : \int\widetilde{\theta}\wedge d\widetilde{\theta}=1\}}$.
\end{de}
The CR Yamabe constant is a CR invariant.

We now come to the primary results of our paper. To motivate the
results, it is helpful to recall the main result in our earlier
paper \cite{CCY}.
\begin{thm}\label{dukethm}
Let $M^{3}$ be a closed CR manifold.

(a) If $P_{4}\geq 0$ and $R>0$, then the non-zero eigenvalues
$\lambda$ of $\Box_{b}$ satisfy $$\lambda\geq\min{R}.$$

It follows the range of $\Box_{b}$ is closed. Coupled with the
result of Kohn stated above, under the conditions $P_{4}\geq 0$ and
$R>0$, $M$ globally embeds into some $\C^{n}$.

(b) A consequence of part (a) is that: If $P_{4}\geq 0$ and the CR
Yamabe constant $>0$, then $M^{3}$ can be globally embedded into
$\C^{n}$, for some $n$.
\end{thm}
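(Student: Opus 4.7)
The plan is to derive the eigenvalue bound in part~(a) from a CR Bochner-type identity, and then invoke Kohn's theorem. Let $\varphi$ be a nontrivial smooth eigenfunction with $\Box_b\varphi=\lambda\varphi$, $\lambda\ne 0$; the goal is to show $\lambda\ge\min R$. Starting from the nonnegative quantity $\int_M|\varphi_{\bar 1\bar 1}|^2\,dV$, I would integrate by parts twice, each time moving a $Z_1$ or $Z_{\bar 1}$ derivative onto its neighbour and commuting derivatives past one another. The commutators $[\nabla_1,\nabla_{\bar 1}]$ acting on a $(0,1)$-tensor produce two kinds of remainder: a Webster-curvature piece of the form $R\,\varphi_{\bar 1}$ and torsion pieces carrying $A_{11}$. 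Using the definitions $P_3\varphi=(\varphi_{\bar 1}{}^{\bar 1}{}_1+iA_{11}\varphi^1)\theta^1$ and $P_4=\delta_b P_3$, the torsion remainders regroup precisely as a multiple of $\int_M\bar\varphi\,P_4\varphi\,dV$.

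After tracking constants, the resulting identity should have the schematic form
\[
\int_M|\varphi_{\bar 1\bar 1}|^2\,dV \;+\; c_1\int_M\bar\varphi\,P_4\varphi\,dV \;=\; c_2\int_M|\Box_b\varphi|^2\,dV \;-\; \int_M R\,|\varphi_{\bar 1}|^2\,dV.
\]
Substituting $\Box_b\varphi=\lambda\varphi$, and using $\lambda\int|\varphi|^2=2\int|\varphi_{\bar 1}|^2$ (obtained by pairing $\Box_b\varphi$ against $\bar\varphi$ and recalling $\Box_b=2\bar\partial_b^*\bar\partial_b$), the right-hand side becomes an expression in $\lambda$ and $\int R\,|\varphi_{\bar 1}|^2$. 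The left-hand side is nonnegative under the hypothesis $P_4\ge 0$, so one deduces an inequality comparing $\lambda^2\int|\varphi|^2$ to $\int R\,|\varphi_{\bar 1}|^2\ge\min R\cdot\tfrac{1}{2}\lambda\int|\varphi|^2$; dividing by $\lambda\int|\varphi|^2>0$ yields $\lambda\ge\min R$.

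Given the spectral gap $\min R>0$ below the first nonzero eigenvalue, closed range of $\Box_b$ follows by a standard functional-analytic argument: on $(\ker\Box_b)^\perp$ one has $\|u\|_{L^2}\le(\min R)^{-1}\|\Box_b u\|_{L^2}$, so a Cauchy sequence in $\operatorname{Range}(\Box_b)$ is the image of an $L^2$-bounded sequence of preimages whose weak limit supplies the required element. Kohn's theorem then gives global embeddability of $M$ into some $\mathbb{C}^n$. For part~(b), positivity of the CR Yamabe constant permits one to solve the CR Yamabe problem and produce a conformal contact form $\widetilde\theta=e^{2f}\theta$ with $\widetilde R$ a positive constant; the CR invariance of $P_4\ge 0$, recorded above via $\widetilde{P_4}=e^{-4f}P_4$ and $\widetilde\theta\wedge d\widetilde\theta=e^{4f}\theta\wedge d\theta$, then lets part~(a) apply to $(M,J,\widetilde\theta)$.

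The main obstacle is the Bochner identity of the second paragraph: deriving it demands meticulous bookkeeping of commutation relations for covariant derivatives on $(0,1)$-tensors in dimension three, and it is a genuinely nontrivial matter that the torsion-bearing remainder assembles \emph{exactly} into $\int_M\bar\varphi\,P_4\varphi\,dV$ rather than some indefinite expression. Were the $A_{11}$ terms not to line up in this way, the hypothesis $P_4\ge 0$ would be powerless to close the estimate; conversely this alignment is precisely what makes $P_4\ge 0$ the natural three-dimensional analogue of the automatic nonnegativity Graham and Lee established in higher dimensions.
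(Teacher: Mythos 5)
This paper does not itself prove Theorem \ref{dukethm}: the statement is quoted from the authors' earlier work \cite{CCY}, so there is no in-paper argument to compare against. Your strategy is, however, exactly the strategy of that source: a CR Bochner-type identity in which, after two integrations by parts and the commutation relations, the torsion remainders regroup into $\int_M\bar\varphi\,P_4\varphi$ and the Webster curvature appears with a favorable sign, whereupon $P_4\ge 0$ and $R\ge\min R>0$ close the estimate; closed range and Kohn's theorem then give embeddability, and conformal covariance of $P_4$ reduces (b) to (a). So the route is the right one.

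The genuine gap is that the whole proof lives in the identity you only write ``schematically,'' and here the constants are not an afterthought: the theorem asserts the sharp bound $\lambda\ge\min R$, not $\lambda\ge c\min R$. With $\lambda\int|\varphi|^2=2\int|\varphi_{\bar 1}|^2$, your schematic identity yields only $\lambda\ge\min R/(2c_2)$, so you need $c_2=1/2$ exactly with your normalization of the curvature term (the identity actually proved in \cite{CCY} has $\tfrac14\int|\Box_b\varphi|^2$ against $\tfrac12\int R|\varphi_{\bar1}|^2$, plus further manifestly nonnegative terms on the left, so your two-term left-hand side is not literally an identity); until this is derived the theorem is not proved. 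Two further points. First, passing from ``nonzero eigenvalues $\ge\min R$'' to ``closed range'' is not automatic unless one knows the spectrum of $\Box_b$ near $0$ consists only of eigenvalues, which for a non-embeddable structure is precisely what can fail; the correct move, and one the same Bochner identity supports, is to prove the coercivity estimate $\langle\Box_b u,u\rangle\ge\min R\,\|u\|_{L^2}^2$ for every smooth $u\perp\ker\Box_b$, not merely for eigenfunctions, after which closed range is immediate and the eigenvalue bound is a corollary. Second, for part (b) you do not need the full solution of the CR Yamabe problem: positivity of the Yamabe constant gives positivity of the first eigenvalue of the conformal sublaplacian $-4\Delta_b+R$, and taking the positive first eigenfunction as conformal factor already produces a contact form with $R>0$ pointwise; combined with $\widetilde{P_4}=e^{-4f}P_4$ and $\widetilde\theta\wedge d\widetilde\theta=e^{4f}\theta\wedge d\theta$, part (a) then applies.
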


Our aim is to investigate a converse to the theorem stated above.
More specifically we want to know if for embedded structures, the CR
Paneitz operator is non-negative. We recall the following example
due to Grauert, Andreotti-Siu \cite{AS} and Rossi \cite{R} and
referred to as Rossi's example in the literature \cite{CS}.

\begin{exa}On the standard sphere $(S^{3},J_{0})$, we consider the deformation
$J_{t}$ given by the vector field $Z_{\bar{1}}+tZ_1$, with $t\in
\mathbb{R}$ and $|t|\not=0,1$. This structure fails to embed
globally since it is known that the CR functions for this structure
are even. We note $u_{t}=z_{1}$ (which is an odd function) is a
continuous family of eigenfunctions for $P_{4}^{t}$ with eigenvalue
$\lambda(t)=\frac{-3t^2}{(1-t^2)^2}$. This means that we are unable
to find a CR function $\varphi_{t}$ for the CR structure
$(S^{3},J_{t})$ which is as close to $u_{0}=z_{1}$ as we please. The
key observation is that the Paneitz operator is negative and the
structure fails to embed.
\end{exa}

The example thus suggests that indeed it is possible that for
embedded structures the CR Paneitz operator may indeed be
non-negative. The main result in Section \ref{locdef} of our paper
is a result that ensures non-negativity of the Paneitz operator, for
CR structures embedded in $\mathbb{C}^2$ along a deformation path
that is real-analytic. More precisely, we are given a triple $(M,
J_0,\theta)$, the background CR structure. This CR structure is
given to be embedded in $\mathbb{C}^2$. Now we deform the almost
complex structure $J_0$ via a real-analytic path $J_t$, keeping off
course the contact form $\theta$ fixed. That is each CR structure
along the path of deformation $J_t$ is smooth for fixed $t$, but the
dependence is real-analytic in the variable $t$. In the sequel when
we perform deformations, the Paneitz operator associated to the
deformed structures $J_t$ will be denoted by $P_4^t$. The Paneitz
operator for the reference structure $J_0$ will be denoted by $P_4$
instead of $P_4^0$.

\begin{thm}\label{main}
 Let $(M,J_0,\theta)$ be a CR structure that is embedded in $\mathbb{C}^2$.
 Let $J_{t}$ be a deformation from $J_0$ along an embeddable
direction with real-analytic dependence on the deformation parameter
$t$. Assume, each structure $J_t$ for fixed $t$ is smooth and
embedded in $\mathbb{C}^2$. Let $P_{4}^t$ denote the Paneitz
operator associated to the structure $(M,J_{t},\theta)$. Assume
further that the Paneitz operator at $t=0$,  $P_4( =P_4^0)$ is
non-negative and
           $$ \ker P_4=\mbox{CR-pluriharmonic functions.}$$
 Then for some $\delta>0$ and $|t|<\delta$
we have:

   $$P_{4}^{t}\geq 0.$$

\end{thm}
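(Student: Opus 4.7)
The plan is to exploit the inclusion $\mathcal{H}_t \subseteq \ker P_4^t$, where $\mathcal{H}_t$ denotes the space of $J_t$-pluriharmonic functions, together with a spectral gap for $P_4^0$ on $\mathcal{H}_0^\perp$ and a continuity argument for the pluriharmonic projection along the embeddable deformation. The heuristic is that any potential negativity of $P_4^t$ must come from eigenfunctions orthogonal to $\mathcal{H}_t$, and embeddability pulls such eigenfunctions close to $\mathcal{H}_0^\perp$, where the spectral gap of $P_4^0$ dominates small perturbations.

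First, I would establish a spectral gap at $t=0$: under the assumptions $P_4 \geq 0$ and $\ker P_4 = \mathcal{H}_0$, there exists $c_0>0$ with $\langle P_4^0 \varphi,\varphi\rangle \geq c_0 \|\varphi\|^2$ for every $\varphi \in \mathcal{H}_0^\perp$, which amounts to the closed-range property of $P_3^0$ on the embedded background structure. Next, using that each $J_t$ is embedded in $\mathbb{C}^2$ and that the family is real-analytic in $t$, I would deform a basis of $J_0$-CR-holomorphic functions into $J_t$-CR-holomorphic functions analytically in $t$ via the implicit function theorem applied to the family $\bar{\partial}_b^t u = 0$ (whose solvability is guaranteed by embeddability). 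Taking real parts yields a real-analytic family of frames for $\mathcal{H}_t$, so the orthogonal projection $\Pi_t : L^2(M) \to \mathcal{H}_t$ depends continuously on $t$ at $t=0$, with $\|\Pi_t - \Pi_0\|\to 0$ as $t\to 0$.

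With these tools in place, for any smooth $\varphi$ I decompose $\varphi = \Pi_t\varphi + \psi_t$ with $\psi_t \in \mathcal{H}_t^\perp$. Since $\Pi_t\varphi \in \ker P_4^t$ and $P_4^t$ is symmetric, the cross terms vanish and
\[\langle P_4^t\varphi,\varphi\rangle = \langle P_4^t \psi_t,\psi_t\rangle.\]
From $\Pi_0\psi_t = (\Pi_0-\Pi_t)\psi_t$ one gets $\|\Pi_0\psi_t\|\leq\|\Pi_0-\Pi_t\|\,\|\psi_t\|$, so $(1-\Pi_0)\psi_t$ carries almost all of the $L^2$ mass of $\psi_t$. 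The spectral gap on $\mathcal{H}_0^\perp$ then yields $\langle P_4^0\psi_t,\psi_t\rangle \geq c_0\bigl(1-\|\Pi_0-\Pi_t\|^2\bigr)\|\psi_t\|^2$, and real-analyticity of the family bounds $\langle(P_4^t - P_4^0)\psi_t,\psi_t\rangle$ by $O(t)\|\psi_t\|_X^2$ for a form-domain norm $\|\cdot\|_X$ controlling $P_4^t$ uniformly in $t$. Combined, these give $\langle P_4^t \psi_t,\psi_t\rangle \geq (c_0/2)\|\psi_t\|^2 \geq 0$ for $|t|<\delta$.

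The principal obstacle is the second step: showing that $\Pi_t$ depends continuously on $t$ in operator norm. Because $\mathcal{H}_0 = \ker P_4^0$ is infinite-dimensional, the standard Kato perturbation theory for isolated eigenvalues of finite multiplicity is inapplicable; one must instead leverage the embeddability hypothesis (which guarantees enough CR functions) together with real-analyticity (which provides the analytic correction) to parametrize CR-holomorphic functions analytically in $t$ via the deformed $\bar{\partial}_b^t$ equation. A secondary difficulty is making the perturbation estimate for $\langle(P_4^t-P_4^0)\psi_t,\psi_t\rangle$ rigorous at the level of quadratic forms, since these are fourth-order operators; this requires choosing the right Sobolev scale associated with the Kohn Laplacian so that the form difference is genuinely $o(1)$ relative to $\langle P_4^0\psi_t,\psi_t\rangle$.
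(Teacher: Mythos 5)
Your overall strategy (decompose against the pluriharmonics, use a spectral gap at $t=0$, absorb the perturbation) is reasonable in outline, but two of its load-bearing steps are genuine gaps, and the paper's proof is structured precisely to avoid both of them. First, you need operator-norm continuity of the projection $\Pi_t$ onto the \emph{infinite-dimensional} space $\mathcal{H}_t$. Your proposed route -- deforming a basis of CR functions by the implicit function theorem applied to $\bar\partial_b^t u=0$ -- does not work as stated: $\bar\partial_b^t$ has infinite-dimensional kernel and is not surjective onto a fixed target, so there is no IFT setup, and even granting a pointwise deformation of each basis element, approximating infinitely many basis vectors does not yield $\|\Pi_t-\Pi_0\|\to 0$ in operator norm. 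The persistence of CR functions under embeddable deformations is itself a deep theorem (Lempert), and Rossi's example shows it genuinely requires embeddability. The paper uses Lempert's stability theorem only in the weak, pointwise form it actually provides: for a \emph{single} normalized kernel element $u_0$ one finds $g_t\in\ker P_4^t$ with $\|u_0-g_t\|$ small (Proposition \ref{pluriext}, which also has to pass through $(\operatorname{Re}F)_z$ and Hartogs to handle the multivaluedness of $F$ when $M$ is not simply connected). It never needs norm continuity of an infinite-rank projection; instead it isolates the finitely many ``small'' nonzero eigenvalues via Epstein's analytic perturbation theory (Proposition \ref{epslemma}, finite-rank projections only) and derives a contradiction from $0=\langle u_t,g_t\rangle=1+o(1)$.

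Second, your closing estimate does not close: you bound $\langle P_4^t\psi_t,\psi_t\rangle$ from below by $c_0\bigl(1-\|\Pi_0-\Pi_t\|^2\bigr)\|\psi_t\|_{L^2}^2 - O(t)\|\psi_t\|_X^2$, where $\|\cdot\|_X$ is a fourth-order form-domain norm. Since $\|\psi_t\|_X^2/\|\psi_t\|_{L^2}^2$ is unbounded over high-frequency $\psi_t$, this difference is not nonnegative for any fixed $t\neq 0$; the spectral gap controls only the $L^2$ norm, not the form norm. To repair this you would need a relative form bound of the type $|\langle(P_4^t-P_4^0)\psi,\psi\rangle|\le \epsilon(t)\langle P_4^0\psi,\psi\rangle_{\phantom{X}}+C(t)\|\psi\|_{L^2}^2$ with $\epsilon(t),C(t)\to 0$, which is itself a nontrivial claim (the quadratic form of $P_4^0$ does not control the full fourth-order norm, vanishing as it does on the infinite-dimensional pluriharmonic space). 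The paper sidesteps this entirely: nonnegativity of $P_4^t$ is deduced from the absence of small eigenvalues together with the discreteness and lower-boundedness of the spectrum, not from a direct quadratic-form comparison.
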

\begin{cor} Under the hypothesis of zero torsion, $A^{11}\equiv 0$,
the hypothesis of Theorem \ref{main} are met by virtue of
Proposition \ref{pospan}. Thus if the structure $J_0$ has zero
torsion, it follows $P_4^t\geq 0$ for $|t|<\delta$.\end{cor}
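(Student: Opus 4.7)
The plan is a direct verification that, under the zero-torsion hypothesis, all assumptions of Theorem \ref{main} are already met, after which the conclusion is immediate. The setup of the corollary inherits from Theorem \ref{main} the embedding of $(M,J_0,\theta)$ in $\mathbb{C}^2$, the real-analytic deformation $J_t$ along embeddable directions, and the smoothness and embeddability of each $J_t$. What remains to check are the two conditions that Theorem \ref{main} places on the reference Paneitz operator $P_4 = P_4^0$, namely non-negativity and the identification $\ker P_4 = \mbox{CR-pluriharmonic functions}$.

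For these I would invoke Proposition \ref{pospan} directly. That proposition asserts that if $A^{11}\equiv 0$, then $\ker P_4 = \ker P_3$ coincides with the space of CR-pluriharmonic functions, and moreover $P_4\geq 0$. These are exactly the two hypotheses on $P_4^0$ required by Theorem \ref{main}; the underlying reason, already recalled in the excerpt, is formula (\ref{zerotor}), which reduces $P_4$ to $\tfrac14\Box_b\overline{\Box}_b$ when the torsion vanishes, whence $\Box_b$ and $\overline{\Box}_b$ commute and can be simultaneously diagonalized, forcing non-negativity.

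Having verified all hypotheses, I would then simply quote Theorem \ref{main} to conclude $P_4^t\geq 0$ for $|t|<\delta$. There is no genuine obstacle to overcome; the only subtlety worth flagging is that the zero-torsion condition is imposed only on the reference structure $J_0$. The deformed structures $J_t$ will generically carry non-vanishing Webster torsion, so the corollary is \emph{not} a pointwise consequence of (\ref{zerotor}) applied at each $t$: it genuinely uses the analytic stability supplied by Theorem \ref{main}, which is exactly why the zero-torsion case is a meaningful instantiation rather than a triviality.
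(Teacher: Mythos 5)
Your proposal is correct and follows exactly the paper's reasoning: Proposition \ref{pospan} supplies both $P_4\geq 0$ and $\ker P_4=$ CR-pluriharmonic functions under $A^{11}\equiv 0$, and the conclusion is then read off from Theorem \ref{main}. Your closing remark that the deformed structures $J_t$ need not be torsion-free, so the result is not a pointwise application of (\ref{zerotor}), is a correct and worthwhile clarification, though the paper does not spell it out.
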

\begin{rem}
Theorem \ref{main} is a consequence of a local deformation theorem
proved in Section \ref{locdef}. It is based in part on the stability
of CR functions and a theorem of Lempert \cite{Lem}. It is also
important to note that in light of Rossi's example, the hypothesis
that $J_t$ is an embedded structure along the deformation path,
cannot be removed.
\end{rem}

\begin{rem} In the theorem above, we need to start deforming from a
manifold which is embedded and whose CR Paneitz operator is
non-negative. Examples of such manifolds are many. The sphere $S^3$
is such a manifold. The CR structure remains invariant under a
circle action and as remarked above, this forces the CR structure to
have vanishing torsion and so as observed above, the CR Paneitz
operator for the sphere is non-negative.

The sphere is simply-connected. We can consider now the manifold for
$(z,w)\in \mathbb{C}^2$ given by
                  $$ |z|^2+\frac{1}{|z|^2}+|w|^2=100.$$
It is evident that the CR structure is invariant under a circle
action. It is also evident that the manifold is not simply
connected. Thus this example provides an example of a starting
structure that is not simply connected and has a non-negative
Paneitz operator.
\end{rem}
\begin{rem}A result in \cite{CC}, Prop. 4.1 states that for embedded
structures $M$:
\begin{equation}\label{paneq}
      c\int_M |f|^2\leq \int_M|P_4f|^2
\end{equation}
which is valid $\forall$ $f\in(\ker P_4^t)^\perp$, with $c>0$ and
independent of $f$. That is $P_4$ has closed range for embedded
structures. However it is not obvious that when one performs a
deformation along embedded directions, the constant $c$ in the
inequality above stays uniformly positive. If one were to obtain a
uniform positive lower bound for $c$ along the deformation path, one
would be able to improve the conclusion of Theorem \ref{main} to a
global result valid for all $t$ in any compact interval containing
$t=0$\end{rem}

This brings us to the remaining part of the converse in Theorem
\ref{dukethm}. That is, do embedded structures have positive Yamabe
constant or positive Webster curvature. This is unlikely globally
but certainly true if the CR structures are small perturbations of
the standard CR structure of $S^3$. This is just by continuity. In
fact by continuity if one performs a small perturbation from any CR
structure whose Webster curvature is positive, the deformed CR
structure does have positive Webster curvature. However, for one
large class of important hypersurfaces in $\mathbb{C}^2$, the
ellipsoids, we do show that the Webster curvature is positive no
matter how much deformed the ellipsoid is. The principal result in
Section \ref{Webcur} is:
\begin{thm}\label{main2}
The Webster curvature for all ellipsoids is positive.
\end{thm}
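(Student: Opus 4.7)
The plan is to perform an explicit computation of the Webster scalar curvature $R$ on a general ellipsoid, after reducing to a two-parameter normal form via biholomorphism, and then to verify positivity of the resulting expression by direct algebraic analysis.

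First I would normalize. The natural contact form $\theta$ induced by a defining function $\rho$, namely (up to sign) $\tfrac{i}{2}(\bar\partial - \partial)\rho|_M$, is preserved under pullback by biholomorphisms of $\mathbb{C}^2$ since $\partial$ and $\bar\partial$ commute with such pullbacks. Hence the entire pseudohermitian structure, and in particular the Webster curvature, is a biholomorphism invariant of the pair (ellipsoid, induced contact form). A general ellipsoid is cut out by $\rho = H(z,\bar z) + \mathrm{Re}\,B(z) - 1$ with $H$ positive-definite Hermitian and $B$ complex symmetric bilinear. A $GL(2,\mathbb{C})$ change diagonalizes $H$ to the identity, and the Autonne--Takagi factorization then allows a further unitary change simultaneously diagonalizing $B$ to real form. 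Hence we may reduce to the two-parameter family
\begin{equation*}
\rho(z_1,z_2) = |z_1|^2 + |z_2|^2 + A\,\mathrm{Re}(z_1^2) + B\,\mathrm{Re}(z_2^2) - 1, \qquad 0 \le A, B < 1,
\end{equation*}
where the bounds on $A,B$ guarantee that the underlying real quadratic form $(1{+}A)x_1^2+(1{-}A)y_1^2+(1{+}B)x_2^2+(1{-}B)y_2^2$ is positive definite.

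Next I would set up the pseudohermitian frame and extract $R$. The vector field $L = \rho_{\bar z_2}\partial_{z_1} - \rho_{\bar z_1}\partial_{z_2}$ is a nowhere-vanishing $(1,0)$-tangent field to $M$; rescaling by $|L|^{-1}$ gives $Z_1$ with $h_{1\bar 1} = 1$. Together with the Reeb field $T$ of $\theta$ and the dual coframe $\{\theta, \theta^1, \theta^{\bar 1}\}$, exterior differentiation plus the normalizations \eqref{id10} determines both the torsion $A^1{}_{\bar 1}$ (nontrivial as soon as $A$ or $B$ is nonzero) and the connection form $\theta_1{}^1$. The Webster curvature is then obtained from \eqref{strueq} as the coefficient of $\theta^1\wedge\theta^{\bar 1}$ in $d\theta_1{}^1$, yielding $R$ as a rational function of $z_1,z_2,\bar z_1,\bar z_2$ and the parameters $A,B$.

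The main obstacle is the final step: verifying that this explicit $R$ is strictly positive on $M$ for every admissible $(A,B)$. The denominator is a positive power of $|L|^2$, nonzero by strict pseudoconvexity, so everything reduces to showing that the numerator---a real polynomial in $(z,\bar z, A, B)$---is positive on $M$. The natural strategy is to isolate the $A=B=0$ contribution (the positive sphere curvature) and to control the $A$- and $B$-corrections using the defining relation to eliminate high-degree monomials and expose a sum-of-squares structure. Positivity should then follow from an elementary inequality in the parameters $A,B\in[0,1)$, but organizing the rational numerator and systematically reducing modulo $\rho=0$ constitutes the real algebraic work of the proof.
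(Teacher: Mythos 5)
Your setup coincides in essence with the paper's: the proof there also works directly from the quadratic defining function, uses the tangential $(1,0)$-field $Z_1=u_2\partial_{z_1}-u_1\partial_{z_2}$, and extracts $R$ from the structure equations, arriving at the closed formula \eqref{exofWebster} (the paper does not rescale to $h_{1\bar 1}=1$ as you propose; keeping $h_{1\bar 1}=-J(u)$ and carrying it through turns out to be cleaner). Your preliminary reduction by a linear biholomorphism to the two-parameter normal form is legitimate, since the contact forms induced by the quadratic defining functions correspond under pullback; the paper instead simply writes the ellipsoid in the diagonal form $A_1x_1^2+B_1y_1^2+A_2x_2^2+B_2y_2^2=1$ from the outset.

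The genuine gap is that you stop exactly where the theorem begins. The entire content of the statement is the step you defer as ``the real algebraic work'': exhibiting the explicit $R$ as a manifestly positive quantity. This is not a routine afterthought, because the torsion is nonzero and \eqref{exofWebster} contains terms of competing signs; moreover your suggested strategy of ``isolating the $A=B=0$ contribution and controlling the corrections'' cannot be run perturbatively, since the parameters range over all of $[0,1)$ and the correction terms are in no sense small. What actually saves the day in the paper is a specific exact cancellation: writing $u=b_1|z_1|^2+b_2|z_2|^2+a_1(z_1^2+\bar z_1^2)+a_2(z_2^2+\bar z_2^2)-1$, the term $-Z_{\bar 1}Z_1h_{1\bar 1}/h_{1\bar 1}^2$ coming from $-h^{1\bar 1}Z_{\bar 1}Z_1\log h_{1\bar 1}$ in \eqref{fte4}--\eqref{fte5} produces contributions quadratic in $u_{\bar 1}$ and $u_{\bar 2}$ that cancel exactly against the torsion/Reeb contributions \eqref{claim1}--\eqref{claim2}, leaving
\[
R=\frac{|Z_1h_{1\bar 1}|^2}{(h_{1\bar 1})^3}+\frac{2b_1(b_2^2-2a_2^2)|u_1|^2+2b_2(b_1^2-2a_1^2)|u_2|^2}{(h_{1\bar 1})^2},
\]
which is positive because $b_i^2-2a_i^2=\tfrac18(A_i^2+B_i^2+6A_iB_i)>0$. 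Until you produce this (or an equivalent) decomposition into nonnegative terms, positivity is asserted rather than proved; everything preceding it in your write-up is standard frame computation.
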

We have been informed by Song-Ying Li that he too was aware of the
theorem stated above.

Now we specialize the situation to $S^3$ and consider small
deformations of the standard CR structure of the sphere. In
particular our goal is to consider the deformed structure on $S^3$
given by,
\[Z_{\bar{1}}^{t}=Z_{\bar{1}}^{\phi_t} = F(Z_{\bar{1}}+t\phi Z_{1}),\]
where $F=(1-t^{2}|\phi|^{2})^{-1/2}$, $Z_{\bar{1}}= \bar{z}
_{2}\frac{\partial}{\partial
Z_{1}}-\bar{z}_{1}\frac{\partial}{\partial Z_{2}}$ and $t\in
(-\epsilon,\epsilon )$. The factor $F$ is introduced to normalize
the Levi form so that $h_{1\bar{1}}\equiv 1$. The CR Paneitz
operator for the deformed structure will be denoted by $P_{0}^{t}$.
We now consider the 3-sphere $S^{3}\subset\C^{2}\ni (z_{1},z_{2})$
and denote by
\[P_{p,q}=\textrm{span} \{ z_{1}^{a} z_{2}^{b} \bar{z}_{1}^{c} \bar{z}_{2}^{d}| a+b=p,\ c+d=q\} \]
and the spherical harmonics
\[H_{p,q}=\{f \in P_{p,q} |-\Delta_{s^{3}}f=(p+q)(p+q+2)f\}.\]
 For a given $\phi\in C^{\infty }(S^{3})$ one has the Fourier representation
\[\phi \sim \sum \phi _{pq}\]
where $\phi _{pq}$ is the projection of $\phi $ onto $H_{p,q}$.

\begin{de}
We say $\phi $ satisfies condition (BE) if and only if
\[\phi _{pq}\equiv 0\  \textrm{for}\ \  p<q+4,\ q=0,1,\cdots.\]
\end{de}

\begin{rem}
Since for $p>q$
\begin{equation*}
P_{p,q}=H_{p,q}\oplus\cdots\oplus H_{p-q,0}.
\end{equation*}
It follows that if $\phi\in P_{p,q}$, then $\phi$ satisfies (BE) if
and only if $p\geq q+4$. Furthermore, the example of Rossi
corresponds to $\phi=1$ and thus fails condition (BE).
\end{rem}

Burns and Epstein proved in \cite{BC} that for $t\in (-\epsilon
,\epsilon )$ and $\phi $ satisfying (BE) the CR structure embeds
into some $\C^{n}$. Conversely Bland \cite{B} showed that
embeddability of a CR structure close to the standard structure on
$S^{3}$
implies condition (BE). To summarize we have \\

{\bf Theorem }[\ {\bf Burns-Epstein-Bland}]. A CR structure close to
the standard structure on $S^3$ is embeddable if and only if $\phi$
satisfies condition (BE).\\

One of the results proved in Section \ref{locdef} our paper, which
is obtained by combining the results in our earlier paper
\cite{CCY}, Theorem \ref{dukethm} with the results obtained in
Section \ref{locdef} of this paper and the theorem of
Burns-Epstein-Bland cited above is:
\begin{thm}\label{deformationthm}
Let us consider the three sphere $S^3$ and a CR structure $J_t$
obtained as a small perturbation of the standard CR structure on
$S^3$ and whose CR vector field is given by $Z_{\bar 1}^t$ above.
Then the following are equivalent.
\begin{enumerate}
\item The CR structure embeds in $\mathbb{C}^2$.
\item $\Box_b^t$, the Kohn Laplacian for the deformed structure has
closed range.
\item The deformation function $\phi(\cdot)$ used to define the CR
vector field $Z_{\bar 1}^t$, satisfies the Burns-Epstein condition
(BE).
\item The CR Paneitz operator $P_4^t$ for the deformed structure is
non-negative and the Yamabe constant for the deformed structure is
positive.
\end{enumerate}\end{thm}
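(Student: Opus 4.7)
The plan is to prove the four conditions equivalent by establishing the cycle $(4) \Rightarrow (1) \Rightarrow (3) \Rightarrow (4)$ together with $(1) \Leftrightarrow (2)$. Two of these steps are immediate from results cited in the excerpt: $(1) \Leftrightarrow (3)$ is the Burns--Epstein--Bland theorem, and $(1) \Leftrightarrow (2)$ is Kohn's theorem, which says closed range of $\Box_b^t$ on functions is equivalent to global embeddability of a strictly pseudoconvex three-manifold into some $\C^n$ (the direction $(2) \Rightarrow (1)$ is exactly the form in which Kohn's theorem is recalled just before Theorem \ref{dukethm}). The implication $(4) \Rightarrow (1)$ is also immediate: it is a direct application of Theorem \ref{dukethm}(b).

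The content lies in $(3) \Rightarrow (4)$, which I would handle by invoking Theorem \ref{main} with the reference structure taken to be the standard $(S^3, J_0, \theta)$. Invariance of the standard structure under the Reeb circle action forces the Webster torsion to vanish, so Proposition \ref{pospan} supplies the hypotheses $P_4 \geq 0$ and $\ker P_4 = \{\text{CR-pluriharmonic functions}\}$ needed to apply Theorem \ref{main}. Since $\phi$ satisfies (BE), Burns--Epstein--Bland shows that each $J_t$ along the path is embedded, so the deformation is along embeddable directions. Real-analyticity in $t$ is built into the formula $Z_{\bar 1}^{\phi_t} = F(Z_{\bar 1} + t\phi Z_1)$ because $F = (1-t^2|\phi|^2)^{-1/2}$ depends real-analytically on $t$ for small $|t|$. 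Theorem \ref{main} then delivers $P_4^t \geq 0$ for $|t| < \delta$. Positivity of the Yamabe constant follows from the fact that the standard $S^3$ has constant positive Webster curvature together with continuous dependence of the Yamabe functional on the CR structure; for sufficiently small $|t|$, the Yamabe constant of $(S^3, J_t, \theta)$ stays positive.

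The main issue to watch is continuity of the Yamabe constant under the deformation. Since it is defined as an infimum, it is a priori only upper-semicontinuous, so one must check that a uniform positive lower bound survives the perturbation. This is handled by noting that for small $t$ the structures $J_t$ are uniformly strictly pseudoconvex and the coefficients of the corresponding conformal sublaplacians $-4\Delta_b^t + R^t$ depend real-analytically on $t$, so standard perturbation theory for self-adjoint elliptic operators yields the required uniform lower bound on the first eigenvalue. After shrinking $\delta$ if necessary so that both the Paneitz non-negativity from Theorem \ref{main} and the Yamabe positivity hold on the same interval, all four conditions become equivalent in the range $|t|<\delta$.
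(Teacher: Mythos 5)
Your proposal is correct and follows essentially the same route as the paper, which deduces the theorem by combining the Corollary to Theorem \ref{main} (applied to the torsion-free standard structure on $S^3$ via Proposition \ref{pospan}) with Burns--Epstein--Bland, Kohn's closed-range theorem, and Theorem \ref{dukethm}(b). Your treatment of the positivity of the Yamabe constant is in fact more careful than the paper's, which simply asserts it ``follows by continuity.''
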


As pointed out earlier, the Yamabe constant is positive for the
deformed structure and follows simply by continuity and the fact we
are only making a small deformation of the standard structure on
$S^3$. The Yamabe constant is of course positive for the standard CR
structure on $S^3$.

{\bf Acknowledgment.} The first author's research was supported in
part by NSF grant DMS-0855541, the second author's research was
supported in part by CIZE Foundation
 and in part by NSC 96-2115-M-008-017-MY3, and the third author's research was supported in part by
NSF grant DMS-0758601. We also thank C. Epstein for providing the
proof of Lemma (\ref{epslemma}). S.C. wishes to thank E. Bedford for
a helpful conversation.

\section{Small deformations of a CR structure}\label{locdef}
In the sequel we will always assume  $D\subset\C^{2}$ is a strictly
pseudoconvex, bounded domain with $(M,J_0)=\partial D$, in
particular $M$ is compact. Suppose that $J_t$ be a deformation from
$J_{0}$ defined by a family of smooth functions in the coordinate
variable of the manifold denoted by $\cdot$ and real analytic in the
deformation parameter variable $t$. The deformation functions on $M$
will be denoted by
 $\psi(\cdot,t)$. That is, the vector field
$\overline{Z}_{1}^{t}=\overline{Z}_{1}+\psi(\cdot,t)Z_{1}$ defines a
CR holomorphic vector field with respect to $J_{t}$. We also fix
notation and denote the CR Paneitz operator wrt to the background CR
structure $J_0$ as $P_4$ instead of $P_4^0$.

We now define the notion of stability for the Paneitz operator.
\begin{de} We say the Paneitz operator $P_4^{t_0}$ associated to the CR structure $J_{t_0}$ is stable,
if given $\epsilon>0$, there exists $\delta>0$ such that for all $t$
such that $|t-t_0|<\delta$, and given any $f\in \ker P_4^{t_0}$,
there exists $g\in \ker P_4^{t}$ such that
               $$||f-g||_{C^0(M)}<\epsilon.$$
               \end{de}
There is a similar notion for the stability of CR functions.
Stability of CR functions was established in a paper by Lempert
\cite{Lem}.

The proof of the next proposition was communicated to us by C.
Epstein \cite{CE}. For our purposes we need the projection operators
constructed in Prop. (8.18) in \cite{E}, except for the zero
eigenspace, to be continuous even at $t=0$. This is the content of
the following proposition. To state the lemma we need a few facts.
We consider a family $L_t$ of operators on $M$, that is holomorphic
in $t\in \mathbb{C}$ for $|t|<\delta$. For real $t$ we assume that
the operators $L_t$ are Hermitian with respect to $L^2(M)$ defined
using a fixed measure independent of $t$ which for our purposes is
$\theta\wedge d\theta$. Our operators $L_t$ are densely defined on
$C^\infty(M)$ and the examples we need them for are Kohn's Laplacian
$\Box_b^t$ and $P_0^t$. We assume moreover that
\begin{enumerate}
\item Each $L_t$ has closed range.
\item Each $L_t$ has pure point, discrete eigenvalues with finite dimensional
eigenspaces.
\item In particular it follows from the above two assumptions that for each $L_t$ we do not have
non-zero eigenvalues with zero as limit point.
\item We assume the spectrum of $L_t$ is bounded below.
\end{enumerate}
Since we will apply the proposition to families of Paneitz operators
$P_0^t$ associated to embedded families of CR structures $(M,J_t)$
and associated Kohn Laplacians $\Box_b^t$, we note that the closed
range hypothesis for embedded structures is satisfied for $P_0^t$ by
a result in \cite{CC} and for $\Box_b^t$ by a result in \cite{Ko}.
Now further assume there exists $r>0$, such that
      \[ ((-r,0)\cup(0,r))\cap\mbox{spectrum}\ L_0=\emptyset .\]
Non-zero eigenvalues of $L_t$ that lie in $(-r,r)$ will be called
small, using the terminology of \cite{E}.

\begin{prop}\label{epslemma} Let $L_z$ be a holomorphic family as above. Then
the small eigenvalues of $L_t$ are finitely many and depend
real-analytically on $t$ for $t\in (-\delta,\delta)$. The projection
$\mathbb{P}_i^t$ into the eigenspace for the small eigenvalue
$\lambda_i(t)$ of $L_t$ depends real-analytically on $t\in
(-\delta,\delta)$. Moreover if $\mathbb{P}^t$ denotes the projection
into the small eigenvalues, then the rank of $\mathbb{P}^t$ is
constant in $t$.\end{prop}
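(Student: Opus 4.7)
The plan is to use the Riesz--Kato spectral projection formula in a form that is uniform across $t=0$. Fix $\rho\in(0,r)$ and let $\Gamma=\{z\in\mathbb{C}:|z|=\rho\}$ be positively oriented. By the hypothesis that $((-r,0)\cup(0,r))\cap\operatorname{spec}(L_0)=\emptyset$, the only point of $\operatorname{spec}(L_0)$ inside the disc bounded by $\Gamma$ is $0$, so $(z-L_0)^{-1}$ is holomorphic in a neighbourhood of $\Gamma$ and uniformly bounded there. Using the holomorphic dependence of $L_t$ on $t$ together with the identity
\[
(z-L_t)^{-1}=(z-L_0)^{-1}\bigl(I-(L_t-L_0)(z-L_0)^{-1}\bigr)^{-1},
\]
and the regularity theory for the operators $\Box_b^t$ and $P_4^t$ (so that $(z-L_0)^{-1}$ gains enough derivatives to absorb the $O(t)$ perturbation $L_t-L_0$), one concludes that for $|t|<\delta$ and $z\in\Gamma$ the resolvent $(z-L_t)^{-1}$ exists and is holomorphic in $t$. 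This refines Epstein's Proposition~8.18 in \cite{E}; the novelty is that our contour encloses the zero eigenvalue of $L_0$ as well as the small nonzero eigenvalues of $L_t$.

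Next, define
\[
\mathbb{P}^t=\frac{1}{2\pi i}\oint_\Gamma (z-L_t)^{-1}\,dz.
\]
This is a holomorphic family of bounded projections whose range is the direct sum of the eigenspaces of $L_t$ for eigenvalues lying inside $\Gamma$, namely $\ker L_t$ together with the small-nonzero eigenspaces. Since $t\mapsto\mathbb{P}^t$ is norm-continuous and projections in a continuous family have locally constant rank (the classical argument: $\|\mathbb{P}^t-\mathbb{P}^{t_0}\|<1$ forces equal rank), we get $\operatorname{rank}\mathbb{P}^t=\dim\ker L_0$ for $|t|<\delta$. In particular, the small eigenvalues of $L_t$ are finitely many, counted with multiplicity.

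Restrict attention to the finite-dimensional subspace $\operatorname{Ran}\mathbb{P}^t$ and set $M(t):=\mathbb{P}^t L_t \mathbb{P}^t$. This operator is Hermitian for real $t$ and depends real-analytically on $t$. Choosing an analytic frame for $\operatorname{Ran}\mathbb{P}^t$ (e.g., applying $\mathbb{P}^t$ to a fixed orthonormal basis of $\operatorname{Ran}\mathbb{P}^0=\ker L_0$ and, if desired, running Gram--Schmidt, which preserves analyticity), the matrix of $M(t)$ in this frame becomes a real-analytic family of Hermitian matrices. Rellich's theorem for analytic families of Hermitian matrices then yields real-analytic parametrisations $\lambda_i(t)$ of the eigenvalues together with real-analytic eigenprojections $\mathbb{P}_i^t$.

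The principal obstacle is the uniform resolvent control in the first step: because $L_t-L_0$ is genuinely unbounded (its coefficients involve high-order derivatives from $\Box_b^t\overline{\Box}_b^t$ and its analogues), one must exploit the regularity properties of the specific operators at hand to show that $(z-L_t)^{-1}$ exists on $\Gamma$ uniformly for $|t|<\delta$. Once this uniform estimate is in place, the remainder of the argument reduces to the standard finite-dimensional Kato/Rellich theory.
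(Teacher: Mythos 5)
There is a genuine gap, and it sits exactly where the real difficulty of the proposition lies. Your contour $\Gamma$ encircles the point $0$ of the spectrum, so your $\mathbb{P}^t$ is the Riesz projection onto $\ker L_t$ \emph{together with} the small nonzero eigenspaces. But for the operators this proposition is actually applied to --- $\Box_b^t$ and $P_4^t$ on compact embedded CR 3-manifolds --- the kernel is infinite-dimensional (the CR functions, resp.\ a space containing the CR-pluriharmonic functions). Hence $\operatorname{rank}\mathbb{P}^t=\dim\ker L_0=\infty$, the step ``the rank is constant, in particular the small eigenvalues are finitely many'' yields nothing, and the reduction of $M(t)=\mathbb{P}^tL_t\mathbb{P}^t$ to a real-analytic family of Hermitian \emph{matrices}, to which Rellich's theorem applies, collapses. (Hypothesis (2) in the paper is loosely worded, but the intended reading, forced by the applications, is that only the \emph{nonzero} eigenvalues have finite multiplicity.) The entire point of Epstein's construction in Proposition (8.18) of \cite{E}, and of the paper's refinement of it, is to isolate the small nonzero eigenvalues \emph{without} ever integrating around the zero eigenspace: one works with the regularized projections $P_i^t=\lambda_i(t)\prod_{j\neq i}(\lambda_i(t)-\lambda_j(t))\mathbb{P}_i^t$, which are holomorphic away from $t=0$, and the paper's own contribution is a removable-singularity argument at $t=0$: the matrix coefficients of $\mathbb{P}_i^t$ are holomorphic in a punctured neighbourhood with at worst a pole at $t=0$, and are bounded on the real axis, so the singularity is removable; constancy of $\operatorname{rank}\mathbb{P}^t=\operatorname{trace}\mathbb{P}^t$ then follows because a continuous integer-valued function is constant. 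Note also that your $\mathbb{P}^t$ is not the object named in the statement, which is the projection onto the small \emph{nonzero} eigenspaces only.

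Separately, the step you yourself flag as ``the principal obstacle'' --- uniform invertibility of $z-L_t$ on $\Gamma$ for $|t|<\delta$ via the Neumann series for $\bigl(I-(L_t-L_0)(z-L_0)^{-1}\bigr)^{-1}$ --- is left open, and it is not routine: $\Box_b$ and $P_4$ are not elliptic, $L_t-L_0$ is a differential operator of the same order as $L_0$, and the subelliptic gain of $(z-L_0)^{-1}$ does not automatically absorb it in $L^2$ operator norm. The paper does not prove this either; it imports the analytic perturbation theory wholesale from \cite{E}. If you want a self-contained contour-integral argument, you must (i) establish the resolvent bound for these specific subelliptic families, and (ii) replace the disc around $0$ by a contour (or by Epstein's weighted projections) that separates the small nonzero eigenvalues from the infinite-dimensional kernel; as written, neither is done.
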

\begin{proof} In Proposition (8.18) \cite{E}, the analytic
dependence of the small eigenvalues is already established. What
remains to be proven is the second part of our lemma. Recall the
definition of $P_i^t$ eqn. (8.23) in \cite{E} which is,
       \begin{equation}\label{prop818} P_i^t=\lambda_i(t)\Pi_{j\neq
       i}(\lambda_i(t)-\lambda_j(t))\mathbb{P}_i^t.\end{equation}
       For $u,v\in L^2(M)$, define the function $g(t)$
          \[ g(t)=\frac{<P_i^tu,v>}{\lambda_i(t)\Pi_{j\neq i}(\lambda_i(t)-\lambda_j(t))}.\]
Then $g(t)$ is holomorphic in a punctured nbhd. of $t=0$. The
function $g(t)$ can have only poles of finite order as singularities
at $t=0$ and on the real axis via (\ref{prop818}), for
$|t|<\varepsilon$ the function $g(t)$ is bounded. Thus the
singularity at $t=0$ is removable and then arguing now as the rest
of Proposition (8.18) in \cite{E} we conclude that the projection
operators are real-analytic and converges to a finite rank
projection operator at t=0. Since
   $${rank}\ \mathbb{P}^t= {trace}\ \mathbb{P}^t,$$
   we obtain the integer valued function ${rank}\ \mathbb{P}^t$ is continuous and hence constant.\end{proof}

\begin{prop}\label{keyproppan}
Suppose $(M,J_0)$ is embedded in $\mathbb{C}^n$. Let $J_{t}$ be a
deformation from $J_0$ along an embeddable direction, with $t$
varying real-analytically and $|t|<\delta$. Let $P_4\geq 0$ and
assume further that the CR Paneitz operator $P_4$ for the structure
$J_0$ is stable. Then $P_4^t$ cannot have small eigenvalues. In
particular there does not exist any continuous family of
eigenfunctions $u_{t}$ corresponding to non-zero eigenvalues of
$P_4^{t}$ branching out from a function $u_0$ in the kernel of
$P_4$.  One therefore concludes $P_4^t\geq 0$.
\end{prop}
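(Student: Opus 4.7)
The plan is to apply Proposition~\ref{epslemma} to the real-analytic family $P_4^t$. Each $P_4^t$ has closed range since $(M,J_t)$ is embedded in $\mathbb{C}^n$ (by the result of~\cite{CC}), so the standing hypotheses of Proposition~\ref{epslemma}---closed range, pure-point discrete spectrum of finite multiplicity away from zero, and a lower bound on the spectrum---are satisfied. The stability assumption on $P_4$ is what guarantees that the conclusions of Proposition~\ref{epslemma} remain meaningful across the reference parameter $t=0$, where the kernel of $P_4$ is infinite-dimensional (for instance the space of CR-pluriharmonic functions, under the extra hypothesis inherited from Theorem~\ref{main}).

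First I would extract a spectral gap at $t=0$. Since $P_4\geq 0$ and has closed range, its non-zero spectrum is bounded away from zero, so there exists $r>0$ with
\[
\mathrm{spec}(P_4)\cap\bigl((-r,0)\cup(0,r)\bigr)=\emptyset.
\]
This is exactly the gap hypothesis of Proposition~\ref{epslemma}, which then produces a projector $\mathbb{P}^t$ onto the small (non-zero) eigenvalues of $P_4^t$ and asserts that its rank is constant for $|t|<\delta$. At $t=0$ this rank equals $0$, hence $\mathrm{rank}\,\mathbb{P}^t=0$ throughout $|t|<\delta$, and therefore $P_4^t$ has no non-zero eigenvalues in $(-r,r)$; this is the first claim. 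The second claim follows immediately: if $u_t$ were a continuous family of eigenfunctions of $P_4^t$ with non-zero eigenvalue $\lambda(t)$ and $u_0\in\ker P_4$, then by continuity $\lambda(t)\to 0$ as $t\to 0$, producing a small eigenvalue for $t$ near $0$ and contradicting what was just proved.

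To conclude $P_4^t\geq 0$ it remains to exclude eigenvalues of $P_4^t$ in $(-\infty,-r]$. For this I would use a standard contour-integral perturbation argument: pick a bounded region $\Omega\subset(-\infty,-r/2)$ whose boundary $\Gamma$ avoids $\mathrm{spec}(P_4)$; since $t\mapsto P_4^t$ is a real-analytic self-adjoint family, the resolvent $(z-P_4^t)^{-1}$ is analytic in $t$ on $\Gamma$ for $|t|$ small, so
\[
\Pi^t(\Omega)=\frac{1}{2\pi i}\oint_\Gamma (z-P_4^t)^{-1}\,dz
\]
is analytic in $t$ and vanishes at $t=0$, hence vanishes for $|t|<\delta$. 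Combined with a uniform lower bound on $\inf\mathrm{spec}(P_4^t)$ from Kato--Rellich perturbation theory for analytic self-adjoint families, this precludes any eigenvalue of $P_4^t$ below $-r$, completing the proof that $P_4^t\geq 0$. The main obstacle in this whole scheme is verifying that no spectral mass leaks out of the infinite-dimensional $\ker P_4$ to generate small non-zero eigenvalues of $P_4^t$ as $t$ moves away from $0$; this is precisely what the stability hypothesis on $P_4$ encodes, and is what licenses the constant-rank conclusion of Proposition~\ref{epslemma} despite the degeneracy of the reference eigenspace.
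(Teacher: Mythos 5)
There is a genuine gap at the heart of your argument: the step ``At $t=0$ this rank equals $0$, hence $\mathrm{rank}\,\mathbb{P}^t=0$ throughout $|t|<\delta$.'' In Proposition~\ref{epslemma} (following Epstein's Prop.~8.18), $\mathbb{P}^t$ for $t\neq 0$ is the projection onto the \emph{non-zero} small eigenvalues, and $\mathbb{P}^0$ is defined as the \emph{limit} of these projections as $t\to 0$. Because the small eigenvalues $\lambda_i(t)\to 0$, their eigenspaces converge into $\ker P_4$, so this limit is in general a non-trivial projection onto the subspace of $\ker P_4$ from which the small eigenvalues branch; it is not the (zero) spectral projection of $P_4^0$ onto $((-r,0)\cup(0,r))$. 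The constant-rank conclusion therefore does not force the rank to vanish. Indeed, if your reading were correct, the stability hypothesis would be superfluous and the proposition would hold for any real-analytic family with closed range --- but Rossi's example exhibits exactly such a branching eigenvalue $\lambda(t)=-3t^2/(1-t^2)^2$ emerging from the kernel, and the entire point of the hypothesis is to rule this out. You acknowledge at the end that stability is ``precisely what licenses the constant-rank conclusion,'' but you never actually use it in any step; that acknowledgment is where the missing proof lives.

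The paper's argument supplies exactly this missing step. Suppose a small non-zero eigenvalue exists; by Proposition~\ref{epslemma} its eigenfunction can be written $u_t=u_0+f_t$ with $u_0\in\ker P_4$, $\|u_0\|_2=1$, and $\|f_t\|_2=o(1)$. Stability produces $g_t\in\ker P_4^t$ with $\|u_0-g_t\|<\epsilon$. Since $u_t$ and $g_t$ are eigenfunctions of the symmetric operator $P_4^t$ for distinct eigenvalues, $\langle u_t,g_t\rangle=0$; on the other hand $\langle u_t,g_t\rangle=1+\langle f_t,u_0\rangle+\langle u_t,g_t-u_0\rangle=1+o(1)\neq 0$, a contradiction. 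This is the concrete mechanism by which stability prevents spectral mass from leaking out of the infinite-dimensional kernel. Your closing paragraph on excluding spectrum in $(-\infty,-r]$ via a contour-integral/resolvent argument is reasonable (the paper is terser there, simply invoking $P_4\geq 0$ and the absence of small eigenvalues), but it does not repair the main gap.
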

\begin{proof} We argue by contradiction. Assume $P_4^t$ has small
eigenvalues. Then by Prop.\ref{epslemma}, the eigenvalues vary
continuously in $t$ and the projection operators to these non-zero
eigenvalues $\mathbb{P}_i^t$ are also continuous. From the
continuous dependence of $\mathbb{P}_i^t$ and $\lambda_i(t)$ on $t$
we conclude that any eigenfunction $u_t$ for a non-zero small
eigenvalue can be written as $u_t= u_0+f_t$, where $u_0$ is in the
kernel of $P_4$, and $||f_t||_2=o(1)$. We normalize $||u_0||_2=1$.
From our stability assumption, there exists a function $g_t$ in
$\ker P_4^t$ such that $\|u_0-g_t\|<\epsilon$. Now $<u_t,g_t>=0$ as
they are eigenfunctions for distinct eigenvalues of $P_4^t$. Thus
for each $t\neq 0$ small enough we have

\begin{equation}
\begin{split}
0&=<u_{t},g_{t}>\\
&=<u_t,u_0>+<u_t,g_t-u_0>\\
&=1+<f_t,u_0>+<u_t,g_t-u_0>=1+o(1)\neq 0,
\end{split}
\end{equation}
which is a contradiction. Thus there are no small eigenvalues of
$P_4^t$. The operator $P_4\geq 0$ by assumption. Thus it follows
that $P_4^t\geq 0$ for $|t|<\delta$.
\end{proof}

We are now in a position to supply the proof of Theorem \ref{main}.
It is a consequence of the next proposition.
\begin{prop}\label{pluriext} Assume the kernel of $P_4$ consists of exactly
the CR pluri-harmonic functions for the structure $(M,J_0)$. Assume
the CR structures $(M,J_t)$ are all embedded in $\mathbb{C}^2$. Then
the Paneitz operator $P_4$ associated to the structure $J_0$ is
stable.\end{prop}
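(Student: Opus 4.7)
My plan is to deduce stability of $P_4$ from the stability of CR functions supplied by Lempert's theorem \cite{Lem}, exploiting the hypothesis that $\ker P_4$ consists of exactly the CR-pluriharmonic functions for $J_0$.

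Fix $f\in\ker P_4$ and $\epsilon>0$. By hypothesis $f$ is CR-pluriharmonic on $(M,J_0)$, so $P_3 f=0$. My first step would be to represent $f$ globally in the form $f=F+\overline{G}$, with $F,G$ CR holomorphic on $(M,J_0)$. This rests on extending $f$ to a pluriharmonic function on the strictly pseudoconvex domain $D$ that $M$ bounds, and then splitting the extension into a holomorphic piece and an antiholomorphic piece by exhibiting the closed holomorphic $(1,0)$-form $\partial u$ as $\partial$-exact on $D$ (a holomorphic Poincar\'e-type statement for strictly pseudoconvex domains of holomorphy).

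Next I would invoke Lempert's stability theorem for CR functions along real-analytic, embedded deformations of $J_0$. Since every $J_t$ is embedded in $\mathbb{C}^2$ and $t\mapsto J_t$ is real-analytic, this produces a $\delta>0$ and CR holomorphic functions $F_t,G_t$ on $(M,J_t)$ for $|t|<\delta$ satisfying
\[
\|F-F_t\|_{C^0(M)}+\|G-G_t\|_{C^0(M)}<\epsilon.
\]
Now set $g_t:=F_t+\overline{G_t}$. Being the sum of a $J_t$-CR function and the conjugate of one, $g_t$ is CR-pluriharmonic with respect to $J_t$, so $P_3^t g_t=0$, and consequently $g_t\in\ker P_4^t$ because $P_4^t=\delta_b^t\circ P_3^t$. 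A triangle inequality then yields $\|f-g_t\|_{C^0(M)}<\epsilon$, which is precisely the stability of $P_4$ in the sense of the definition given above.

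The principal obstacle, and the only substantive external input, is Lempert's stability theorem itself: it is exactly the step where the hypothesis that each $J_t$ is embedded in $\mathbb{C}^2$ is used, and hence the step that excludes Rossi's non-embeddable example. A secondary and more delicate technical point is the global decomposition $f=F+\overline{G}$ on $M$, which is straightforward when $D$ is simply connected but in general requires the exactness facts indicated above together with some care about the topology of $D$ (as in the non-simply-connected example singled out in the Remarks). Once both ingredients are in place the remainder of the argument is formal.
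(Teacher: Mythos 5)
Your overall strategy---reducing stability of $P_4$ to Lempert's stability of CR functions, using the hypothesis that $\ker P_4$ is exactly the CR-pluriharmonic functions---is the same as the paper's, and the final steps are fine: a sum of a $J_t$-CR function and a conjugate of one is CR-pluriharmonic for $J_t$, hence lies in $\ker P_4^t$, and the triangle inequality closes the argument. The gap is in your first step. The global decomposition $f=F+\overline{G}$ with $F,G$ CR on all of $M$ does not exist in general, and the ``holomorphic Poincar\'e-type statement'' you invoke to produce it is false for multiply connected domains of holomorphy. What you need is that the closed holomorphic $(1,0)$-form $\partial f$ (of the pluriharmonic extension of $f$) be exact on $D$; for real $f$ its periods are purely imaginary, but they need not vanish, and strict pseudoconvexity does not kill $H^1(D,\mathbb{C})$. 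Concretely, on the paper's own example $|z|^2+|z|^{-2}+|w|^2=100$, the function $f=\log|z|$ is CR-pluriharmonic (locally $\operatorname{Re}(\tfrac12\log z)$), yet $\partial f=\tfrac{dz}{2z}$ has period $\pi i$ around the loop $|z|=1$, $w=0$, so no global $F,G$ with $f=F+\overline{G}$ exist. You flag this point as ``delicate,'' but the exactness fact you propose to resolve it with is precisely what fails.

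The paper circumvents exactly this obstruction by applying Lempert's theorem not to a primitive of $f$ but to a derivative: $f_z$ (written there as $(\operatorname{Re} F)_z$) is a single-valued CR function on all of $M$ regardless of topology, extends holomorphically to $\Omega$ by Hartogs, and is then perturbed to a $J_t$-CR function $\Xi_t$; only afterwards does one integrate $\Xi_t$ in $z$ and take the real part, where the period ambiguity contaminates only the imaginary part. So the differentiation trick is not an optional refinement---it is the device that handles the non-simply-connected case your decomposition cannot reach. To repair your argument you would need either this trick or a separate (e.g.\ finite-dimensional, $H^1$-based) treatment of the period classes; as written, your proof covers only the simply connected case, which the paper already notes is the easy one.
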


\begin{proof} By assumption any function $f\in\ker P_4$ is a CR pluriharmonic function. Locally
then $f$ is the real part of a CR holomorphic function $F$. We may
now locally extend $F$ into $\Omega$ where $M=\partial\Omega$. We
continue to denote the extension by the symbol $F$. We now denote
points in $\mathbb{C}^2$ by $(z,w)$. Next note in $\Omega$ that
$(Re\ F)_z$ is a holomorphic function defined globally in a  nbhd of
$M$ in $\Omega$. This is because $Re\ F=f$ is globally defined on
$M$. Since $M$ is connected, by Hartog's theorem we can even assume
that $(Re \ F)_z$ is defined in all of $\Omega$. Let us denote the
restriction to $M$ of $(Re\ F)_z$ by $\Xi$. Now $\Xi$ is a CR
function. We apply the stability theorem of Lempert \cite{Lem} to
obtain a function $\Xi_t$ which is a CR function for the structure
$J_t$ and such that
                   $$||\Xi-\Xi_t||_\infty<\epsilon.$$
Being a CR function $\Xi_t$ lies in the kernel of $P_4^t$. Next we
consider the extension of $\Xi_t$ to the interior as a holomorphic
function. This globally exists by Hartog's theorem again. We
continue to denote this extension by $\Xi_t$. Next we integrate
$\Xi_t$ in the $z$ variable, that is we consider the indefinite
integral
            $$F_t(z,w)=\int \Xi_t\ dz.$$
There may be an ambiguity in the definition of $F_t$, because of
imaginary periods but the Real part of $F_t$ is well-defined. Set
$f_t=Re\ F_t$. Then $f_t$ is pluriharmonic and its restriction to
$M$ is CR-pluriharmonic. Similarly we also consider
           $$H(z,w)=\int \Xi\ dz$$
Note that $H(z,w)$ may differ from $F$ because of imaginary periods.
But their real parts do coincide.

We now easily see using the the stability estimate above,
                        $$||f-f_t||_{L^\infty(M)}<\epsilon.$$
We have proved stability.

If $M$ were simply connected then the proof of the proposition is
quite easy, since then $f$ being CR pluriharmonic can be taken to be
the real part of a CR function $G$ which is defined globally on $M$.
One may then apply the result of Lempert on stability of CR
functions to $G$. The stability of the pluriharmonic functions
follows by consideration of the real part.
\end{proof}

The proof of Theorem \ref{main} follows because under the hypothesis
of the theorem that the kernel of $P_4$ is exactly the CR
pluri-harmonic functions, we obtain via Proposition \ref{pluriext}
that the Paneitz operator $P_4$ is stable. Thus the hypothesis of
Proposition \ref{keyproppan} is satisfied and we may conclude that
$P_4^t\geq 0$ for $|t|<\delta$.

Deformation functions $\psi(\cdot,t)=t\phi(\cdot)$ where $\phi\in
C^\infty(S^3)$ have been studied in an important paper by
Burns-Epstein\cite{BC}. The standard CR structure on $S^3$ has
vanishing torsion. Thus combining the results in \cite {BC} and our
Theorem \ref{main} we also have:

\begin{cor}
Suppose that $(S^{3},J_{0})$ is the sphere $S^3$ equipped with the
standard CR structure and $\psi(\cdot,t)=t\phi(\cdot)$ is a
deformation function where $\phi(\cdot)$ satisfies the Burns-Epstein
condition. If we define the deformation $J_t$ of the CR structures
by $\psi(\cdot,t)$ then $P_{4}^{t}\geq 0$ for $t$ small enough.
\end{cor}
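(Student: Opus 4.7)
The plan is to verify the hypotheses of Theorem \ref{main} one by one and then invoke it directly; no new calculation is needed, only a careful cataloging of facts already established in the paper.

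First I would recall that the standard CR structure $J_0$ on $S^3$ is invariant under the diagonal $S^1$-action $e^{i\alpha}\cdot(z_1,z_2)=(e^{i\alpha}z_1,e^{i\alpha}z_2)$, whose infinitesimal generator is (a constant multiple of) the Reeb field $T$ of the standard contact form. Thus $L_T J_0=0$, which by the characterization of vanishing torsion recalled just after equation \eqref{zerotor} (cf.\ \cite{W}) means that $A^{11}\equiv 0$ for the background structure $(S^3,J_0,\theta)$.

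With this zero-torsion hypothesis in force, Proposition \ref{pospan} delivers both of the conditions on the background Paneitz operator required by Theorem \ref{main}, namely $P_4\geq 0$ and $\ker P_4=\{\text{CR-pluriharmonic functions}\}$. Next, the deformation function $\psi(\cdot,t)=t\phi(\cdot)$ is linear in $t$, hence trivially real-analytic in $t$ with values in $C^\infty(S^3)$; and the base $(S^3,J_0,\theta)$ is embedded in $\mathbb{C}^2$ as the unit sphere. Finally, the Burns--Epstein--Bland theorem quoted in the introduction asserts that condition (BE) on $\phi$ is precisely what guarantees embeddability of the perturbed structure $J_t$ for $|t|$ small, so the deformation is along an embeddable direction in the sense demanded by Theorem \ref{main}.

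Once these four items are in hand, Theorem \ref{main} applies verbatim and yields $P_4^t\geq 0$ for $|t|<\delta$ with some $\delta>0$. The only mild subtlety I would watch is to arrange the Burns--Epstein realization as a hypersurface in $\mathbb{C}^2$ (rather than merely in some $\mathbb{C}^n$), as required by the formulation of Theorem \ref{main}; for structures close to the standard one on $S^3$ this is standard and is built into the Burns--Epstein construction through the small perturbation of the unit sphere. In short, the corollary is essentially a bookkeeping consequence of Proposition \ref{pospan}, the Burns--Epstein--Bland embeddability criterion, and Theorem \ref{main}, with no additional analytic input needed.
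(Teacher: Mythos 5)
Your proposal is correct and follows exactly the route the paper takes: the authors also observe that the standard structure on $S^3$ has vanishing torsion (so Proposition \ref{pospan} supplies $P_4\geq 0$ and $\ker P_4=$ CR-pluriharmonic functions), note that $\psi(\cdot,t)=t\phi(\cdot)$ with $\phi$ satisfying (BE) gives an embeddable, real-analytic deformation by Burns--Epstein, and then cite Theorem \ref{main}. Your write-up simply makes explicit the hypothesis-checking that the paper compresses into one sentence.
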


 The previous Corollary when combined with the results in \cite{B},
 \cite{BC} and \cite{CCY}, easily yields Theorem \ref{deformationthm}
 of the introduction.

\section{The Webster curvature for Ellipsoids}\label{Webcur}
In this section, we are going to show Theorem \ref{main2} of the
introduction. We will need a formula for the Webster curvature for
hypersurfaces embedded in $\mathbb{C}^2$ in a form suitable for our
computations. Other formulae have been derived in \cite{LL}, see
Theorem 1.1 there.

Let $M\hookrightarrow \mathbb{C}^{2}$ be a hypersurface defined by a
defining function $u(z_{1},z_{2})$:
\[M^{3}=\{(z_{1},z_{2})\in \mathbb{C}^{2}\ |\  u(z_{1},z_{2})=0\},\]
where $du(z)\neq 0$ for all $z\in M$. Equipped with the induced CR
structure from $\mathbb{C}^{2}$ and the contact form
\[\theta=\frac{i(\bar{\partial}u-\partial{u})}{2}|_{M^{3}},\]
$M$ is a pseudohermitian manifold, provided that $\theta\wedge
d\theta\neq 0$. It is easy to see that the induced CR structure can
be defined by the complex $(1,0)$-vector
\begin{equation}
Z_{1}=u_{2}\frac{\partial }{\partial z_{1}}-u_{1}\frac{\partial
}{\partial z_{2}}.
\end{equation}
We will use the notations:
\[u_{j}=\frac{\partial u}{\partial z_{j}},\ \ \ u_{jk}=\frac{\partial^{2} u}{\partial z_{j}\partial z_{k}},\]
for all $j, k\in\{1,2, \bar{1}, \bar{2}\}$. The characteristic
vector field $T$ is a real vector field which is uniquely defined by
\begin{equation}
d\theta(T\wedge\cdot)=0,\ \ \ \theta(T)=1.
\end{equation}

Let $\{\theta^{1}, \theta^{\bar 1}, \theta\}$ be the dual frame to
$\{Z_{1}, Z_{\bar 1}, T\}$. Then we have
\begin{equation}
d\theta=ih_{1\bar 1}\theta^{1}\wedge\theta^{\bar 1},
\end{equation}
for some nonzero real function $h_{1\bar 1}$. If necessary, we could
change the sign for $u$ and assume, without loss of generality, that
$h_{1\bar 1}>0$.

Let
\begin{equation}
J(u)=\left|\begin{array}{ccc}
u&u_{\bar 1}&u_{\bar 2}\\
u_{1}&u_{1\bar 1}&u_{1\bar 2}\\
u_{2}&u_{2\bar 1}&u_{2\bar 2}.
\end{array}\right|
\end{equation}

\begin{prop}\label{fefferman}
On $M^{3}$, we have
\begin{equation}\label{levi}
h_{1\bar 1}=-J(u).
\end{equation}
\end{prop}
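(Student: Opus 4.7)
The plan is to compute both sides of \eqref{levi} directly from the definitions and check they agree, using only the standard identities of the Dolbeault complex together with row expansion of the determinant $J(u)$.

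First I would rewrite the contact form using $\partial\partial=\bar\partial\bar\partial=0$ and $\partial\bar\partial=-\bar\partial\partial$ to obtain
\[
d\theta \;=\; \tfrac{i}{2}\bigl(d\bar\partial u - d\partial u\bigr) \;=\; i\,\partial\bar\partial u \;=\; i\sum_{j,k=1}^{2} u_{j\bar k}\,dz_j\wedge d\bar z_k.
\]
This reduces the left-hand side of \eqref{levi} to a bilinear form evaluation: since $d\theta = ih_{1\bar 1}\,\theta^1\wedge\theta^{\bar 1}$ and $\theta^1(Z_1)=\theta^{\bar 1}(Z_{\bar 1})=1$, $\theta^1(Z_{\bar 1})=\theta^{\bar 1}(Z_1)=0$, we have $h_{1\bar 1} = -i\,d\theta(Z_1,Z_{\bar 1})$.

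Next I would plug $Z_1 = u_2\partial_{z_1}-u_1\partial_{z_2}$ and $Z_{\bar 1} = u_{\bar 2}\partial_{\bar z_1}-u_{\bar 1}\partial_{\bar z_2}$ into the expression for $d\theta$. Using $(dz_j\wedge d\bar z_k)(Z_1,Z_{\bar 1}) = dz_j(Z_1)\,d\bar z_k(Z_{\bar 1})$, a short expansion gives
\[
h_{1\bar 1} \;=\; |u_2|^2\,u_{1\bar 1} \;-\; u_2 u_{\bar 1}\,u_{1\bar 2} \;-\; u_1 u_{\bar 2}\,u_{2\bar 1} \;+\; |u_1|^2\,u_{2\bar 2}.
\]

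For the right-hand side, I would expand $J(u)$ along the first row. On $M$ we have $u=0$, so the $u$-entry disappears and
\[
J(u)\big|_{M} \;=\; -u_{\bar 1}\bigl(u_1 u_{2\bar 2} - u_2 u_{1\bar 2}\bigr) \;+\; u_{\bar 2}\bigl(u_1 u_{2\bar 1} - u_2 u_{1\bar 1}\bigr).
\]
Multiplying by $-1$ and collecting terms reproduces exactly the four-term expression for $h_{1\bar 1}$ above, which establishes \eqref{levi}.

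The only genuine step requiring care is the sign/factor convention in going from $d\theta = ih_{1\bar 1}\theta^1\wedge\theta^{\bar 1}$ to the scalar identity $h_{1\bar 1} = -i\,d\theta(Z_1,Z_{\bar 1})$; once that is pinned down, the rest is routine bookkeeping matching the four products $|u_1|^2u_{2\bar 2}$, $|u_2|^2u_{1\bar 1}$, $u_2 u_{\bar 1}u_{1\bar 2}$, $u_1 u_{\bar 2}u_{2\bar 1}$ on both sides. No global argument is needed because the identity is pointwise on $M$, and the positivity assumption $h_{1\bar 1}>0$ made earlier is consistent with the standard pseudoconvexity sign: after orienting $u$ so that $-J(u)>0$ on $M$ (the Levi form condition for $\{u<0\}$), the two sides agree as positive quantities.
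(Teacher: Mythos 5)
Your proof is correct and follows essentially the same route as the paper: compute $d\theta = i\sum_{j,k} u_{j\bar k}\,dz_j\wedge d\bar z_k$, evaluate $h_{1\bar 1}=-i\,d\theta(Z_1,Z_{\bar 1})$ to get the four-term expression, and identify it with $-J(u)$ using $u=0$ on $M$. The only (cosmetic) differences are that you obtain $d\theta$ via $i\partial\bar\partial u$ rather than $-i\,d(\partial u)|_M$, and you expand the determinant along its first row where the paper simply recognizes the four-term sum as the cofactor expansion.
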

\begin{proof}We compute, on $M$,
\begin{equation}
\begin{split}
d\theta&=-id(\partial u)\\
&=-id(\sum_{j=1}^{2}u_{j}dz_{j})=-i(\sum_{j=1}^{2}du_{j}\wedge dz_{j})\\
&=i\sum_{j,k=1}^{2}u_{jk}dz_{j}\wedge dz_{k}+i\sum_{j,k=1}^{2}u_{j\bar k}dz_{j}\wedge dz_{\bar k}\\
&=i\sum_{j,k=1}^{2}u_{j\bar k}dz_{j}\wedge dz_{\bar k}.
\end{split}
\end{equation}
Therefore
\begin{equation}
\begin{split}
h_{1\bar 1}&=-id\theta(Z_{1}\wedge Z_{\bar 1})\\
&=\sum_{j,k=1}^{2}u_{j\bar k}dz_{j}\wedge dz_{\bar k}((u_{2}\frac{\partial}{\partial z_{1}}-u_{1}\frac{\partial}{\partial z_{2}})\wedge(u_{\bar 2}\frac{\partial}{\partial z_{\bar 1}}-u_{\bar 1}\frac{\partial}{\partial z_{\bar 2}}))\\
&=u_{1\bar 1}u_{2}u_{\bar 2}+u_{2\bar 2}u_{1}u_{\bar 1}-u_{1\bar 2}u_{2}u_{\bar 1}-u_{2\bar 1}u_{1}u_{\bar 2}\\
&=-\left|
\begin{array}{ccc}
0&u_{\bar 1}&u_{\bar 2}\\
u_{1}&u_{1\bar 1}&u_{1\bar 2}\\
u_{2}&u_{2\bar 1}&u_{2\bar 2}
\end{array}
\right| =-\left|
\begin{array}{ccc}
u&u_{\bar 1}&u_{\bar 2}\\
u_{1}&u_{1\bar 1}&u_{1\bar 2}\\
u_{2}&u_{2\bar 1}&u_{2\bar 2}
\end{array}
\right| =-J(u), \ \ \ \ \textrm{on}\ \ M.
\end{split}
\end{equation}
\end{proof}

Let
\begin{equation}
U=(U_{ab})_{3\times 3}=\left[
\begin{array}{ccc}
u&u_{\bar 1}&u_{\bar 2}\\
u_{1}&u_{1\bar 1}&u_{1\bar 2}\\
u_{2}&u_{2\bar 1}&u_{2\bar 2}
\end{array}
\right].
\end{equation}
That is, $U_{ba}=\overline{U_{ab}}$, and
\begin{equation}
\begin{split}
&U_{11}=u;\ U_{12}=u_{\bar 1};\ U_{13}=u_{\bar 2};\\
&U_{(j+1)(k+1)}=u_{j\bar k},\ \ \ \ 1\leq j,k\leq 2.
\end{split}
\end{equation}

Note that $h_{1\bar 1}>0$, so the matrix $U$ is invertible on a
neighborhood of $M$. Let $U^{-1}=(U^{ab})$ be the inverse of $U$.
Then it is easy to show that $U^{ba}=\overline{U^{ab}}$ and
\begin{equation}
\begin{split}
U^{11}&=\frac{u_{1\bar 2}u_{2\bar 1}-u_{1\bar 1}u_{2\bar
2}}{h_{1\bar 1}};\ U^{12}=\frac{u_{2\bar 2}u_{\bar 1}-u_{2\bar
1}u_{\bar 2}}{h_{1\bar 1}};\
U^{13}=\frac{u_{1\bar 1}u_{\bar 2}-u_{1\bar 2}u_{\bar 1}}{h_{1\bar 1}};\\
U^{22}&=\frac{-u_{2}u_{\bar 2}}{h_{1\bar 1}};\
U^{23}=\frac{-u_{1}u_{\bar 2}}{h_{1\bar 1}};\
U^{33}=\frac{u_{1}u_{\bar 1}}{h_{1\bar 1}}.
\end{split}
\end{equation}

\begin{prop}
On $M$,
\begin{equation}
\begin{split}
T&=\sum_{j=1}^{2}iU^{1(j+1)}\frac{\partial}{\partial z_{j}}+\ \textrm{complex conjugate}\\
\theta^{1}&=U^{13}dz_{1}-U^{12}dz_{2}.
\end{split}
\end{equation}
\end{prop}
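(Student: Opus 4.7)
The plan is to verify the two defining conditions of $T$ and the three defining conditions of $\theta^1$ by direct computation, using the single key fact that $U U^{-1} = U^{-1} U = I$ together with $u=0$ on $M$.

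The computation of $d\theta$ carried out in the proof of Proposition \ref{fefferman} gives $d\theta = i\sum_{j,k=1}^{2} u_{j\bar k}\,dz_j\wedge dz_{\bar k}$, while the defining formula yields $\theta=\frac{i}{2}\sum_{k=1}^2(u_{\bar k}dz_{\bar k}-u_k dz_k)$ on $M$. Writing the candidate characteristic vector as $T = A+\bar A$ with $A=i\sum_j U^{1(j+1)}\partial_{z_j}$, one has $dz_j(T)=iU^{1(j+1)}$ and $dz_{\bar k}(T)=-iU^{(k+1)1}$.

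First I would check $\theta(T)=1$. The first-row identity $\sum_b U_{1b}U^{b1}=1$ reads $u\cdot U^{11}+u_{\bar 1}U^{21}+u_{\bar 2}U^{31}=1$; restricting to $M$ (where $u=0$) and conjugating gives $u_1 U^{12}+u_2 U^{13}=1$. A direct expansion of $\theta(T)$ splits into a holomorphic piece $\tfrac{1}{2}\sum_k u_k U^{1(k+1)}=\tfrac{1}{2}$ and its conjugate, summing to $1$. Next, for $d\theta(T,\cdot)=0$, I would compute that for any vector $V$ the coefficient of $dz_{\bar k}(V)$ in $d\theta(T,V)$ equals $-\sum_j u_{j\bar k}U^{1(j+1)}$. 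The identity $\sum_b U^{1b}U_{b(k+1)}=0$ (valid for $k\geq 1$) yields $\sum_j U^{1(j+1)}u_{j\bar k}=-U^{11}u_{\bar k}$, so this coefficient is $U^{11}u_{\bar k}$. The analogous identity $\sum_b U_{(j+1)b}U^{b1}=0$ shows the coefficient of $dz_j(V)$ is $U^{11}u_j$. Therefore $d\theta(T,V)=U^{11}\,du(V)$, which vanishes for $V$ tangent to $M$. Combined with the normalization $\theta(T)=1$, this characterizes $T$ uniquely.

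For $\theta^1=U^{13}dz_1-U^{12}dz_2$ I would verify the three duality relations against $\{Z_1,Z_{\bar 1},T\}$. Since $\theta^1$ has no $dz_{\bar k}$-component, $\theta^1(Z_{\bar 1})=0$ is immediate. Pairing with $Z_1=u_2\partial_{z_1}-u_1\partial_{z_2}$ gives $\theta^1(Z_1)=u_2 U^{13}+u_1 U^{12}=1$ on $M$ by the identity just established. Pairing with $T$ yields $\theta^1(T)=U^{13}(iU^{12})-U^{12}(iU^{13})=0$. This confirms that $\theta^1$ is the dual $(1,0)$-coframe element.

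The only mild obstacle is bookkeeping: carefully selecting the correct row and column of the matrix identity $UU^{-1}=I$ or $U^{-1}U=I$ to extract each relation, and remembering that the off-diagonal vanishing identities are what force $d\theta(T,\cdot)$ to be a multiple of $du$, so that the restriction $u=0$ does the rest of the work. No deeper idea is needed beyond linear algebra applied to the Hermitian matrix $U$.
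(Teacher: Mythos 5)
Your proposal is correct and follows essentially the same route as the paper: both verify $\theta(T)=1$ and $d\theta(T,\cdot)=U^{11}\,du=0$ on $M$ via the row/column identities of $UU^{-1}=I$ with $u=0$, and then check the duality relations $\theta^1(Z_1)=1$, $\theta^1(Z_{\bar 1})=0$, $\theta^1(T)=0$ (which the paper leaves as "a direct computation"). No substantive differences.
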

\begin{proof}
We just check that $T$ satisfies
\[d\theta(T\wedge\cdot)=0,\ \ \ \theta(T)=1.\]
We compute
\begin{equation}
\begin{split}
d\theta(T\wedge\cdot)&=i(\sum_{j,k=1}^{2}u_{j\bar k}dz_{j}\wedge dz_{\bar k})(T\wedge\cdot)\\
&=i\sum_{j,k=1}^{2}u_{j\bar k}(dz_{j}(T)dz_{\bar k}-dz_{\bar k}(T)dz_{j})\\
&=-\sum_{j,k=1}^{2}(U_{(j+1)(k+1)}U^{1(j+1)}dz_{\bar k}+U_{(j+1)(k+1)}U^{(k+1)1}dz_{j})\\
&=-\sum_{k=1}^{2}(\delta_{k3}-U_{1(k+1)}U^{11} )dz_{\bar k}-\sum_{j=1}^{2}(\delta_{3j}-U^{11}U_{(j+1)1} )dz_{j}\\
&=\sum_{j=1}^{2}U_{1(j+1)}U^{11}dz_{\bar j}+U^{11}U_{(j+1)1}dz_{j}\\
&=U^{11}(\sum_{j=1}^{2}u_{j}dz_{j}+u_{\bar j}dz_{\bar j})\\
&=U^{11}du=0,\ \ \ \textrm{on}\ \ M,
\end{split}
\end{equation}
and
\begin{equation}
\begin{split}
\theta(T)&=-i(\sum_{j=1}^{2}u_{j}dz_{j})(T)\\
&=\sum_{j=1}^{2}u_{j}U^{1(j+1)}=\sum_{j=1}^{2}U_{(j+1)1}U^{1(j+1)}\\
&=\sum_{b=1}^{3}U_{b1}U^{1b},\ \ \textrm{on}\ M\ (U_{11}=0,\ \ \textrm{on}\ M)\\
&=1
\end{split}
\end{equation}
Similarly, after a direct computation, we get $\theta^{1}(T)=0$ and
$\theta^{1}(Z_{1})=1$.
\end{proof}

\begin{prop}
With respect to the frame $Z_{1}$, the connection form
$\theta_{1}{}^{1}$ and the torsion form $\tau^{1}$ are expressed by
\begin{equation}\label{connetor}
\begin{split}
\theta_{1}{}^{1}&=(h^{1\bar 1}Z_{1}h_{1\bar 1})\theta^{1}+((u_{1}Tc_{2}-u_{2}Tc_{1})+i(c_{1}Z_{1}c_{2}-c_{2}Z_{1}c_{1}))\theta,\\
\tau^{1}&=-i(c_{1}Z_{\bar 1}c_{2}-c_{2}Z_{\bar 1}c_{1})\theta^{\bar
1},
\end{split}
\end{equation}
where $h^{1\bar 1}=h_{1\bar 1}^{-1},\ c_{1}=U^{13}$ and
$c_{2}=-U^{12}$.
\end{prop}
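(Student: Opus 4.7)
The plan is to compute $d\theta^1$ directly from $\theta^1 = c_1\,dz_1 + c_2\,dz_2$ and match the result against the structure equation $d\theta^1 = \theta^1 \wedge \theta_1{}^1 + \theta \wedge \tau^1$ with $\tau^1 \equiv 0 \bmod \theta^{\bar 1}$. First I would express the ambient 1-forms $dz_1, dz_2$ restricted to $M$ in the coframe $\{\theta^1, \theta^{\bar 1}, \theta\}$: using the dual pairing with $\{Z_1, Z_{\bar 1}, T\}$ from the previous proposition, and noting $dz_j(Z_{\bar 1}) = 0$ since $Z_{\bar 1}$ is of type $(0,1)$, one finds
\[ dz_1|_M = u_2\,\theta^1 - ic_2\,\theta, \qquad dz_2|_M = -u_1\,\theta^1 + ic_1\,\theta. \]

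Substituting these, together with $dc_j = (Z_1 c_j)\theta^1 + (Z_{\bar 1} c_j)\theta^{\bar 1} + (Tc_j)\theta$, into $d\theta^1 = dc_1 \wedge dz_1 + dc_2 \wedge dz_2$ and collecting terms in the basis $\{\theta^1\wedge\theta^{\bar 1}, \theta^1\wedge\theta, \theta^{\bar 1}\wedge\theta\}$ yields
\begin{align*}
d\theta^1 &= (u_1 Z_{\bar 1} c_2 - u_2 Z_{\bar 1} c_1)\,\theta^1\wedge\theta^{\bar 1} \\
&\quad + \bigl[u_1 Tc_2 - u_2 Tc_1 + i(c_1 Z_1 c_2 - c_2 Z_1 c_1)\bigr]\theta^1\wedge\theta \\
&\quad + i(c_1 Z_{\bar 1} c_2 - c_2 Z_{\bar 1} c_1)\,\theta^{\bar 1}\wedge\theta.
\end{align*}
Writing $\theta_1{}^1 = p\,\theta^1 + q\,\theta^{\bar 1} + r\,\theta$ and $\tau^1 = s\,\theta^{\bar 1}$, matching against $\theta^1\wedge\theta_1{}^1 + \theta\wedge\tau^1$ gives the claimed expression for $\tau^1$ directly from the $\theta^{\bar 1}\wedge\theta$ coefficient, identifies $r$ with the bracketed expression above, and forces $q = u_1 Z_{\bar 1} c_2 - u_2 Z_{\bar 1} c_1$.

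The main obstacle is the algebraic identity $q = 0$, which ensures $\theta_1{}^1$ has no $\theta^{\bar 1}$ component. To establish it I would differentiate the identity $U^{11} u + U^{12} u_1 + U^{13} u_2 = 1$ (coming from the $(1,1)$ entry of $U^{-1}U = I$) along $Z_{\bar 1}$. On $M$ both $u$ and $Z_{\bar 1} u$ vanish identically (the latter because $Z_{\bar 1}$ is a $(0,1)$-vector and $u$ is real-valued with $Z_{\bar 1} u = u_{\bar 2} u_{\bar 1} - u_{\bar 1} u_{\bar 2} = 0$), so after sign rearrangement $q = U^{12} Z_{\bar 1} u_1 + U^{13} Z_{\bar 1} u_2$. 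Expanding $Z_{\bar 1} u_j = u_{\bar 2} u_{j\bar 1} - u_{\bar 1} u_{j\bar 2}$ and applying the orthogonality relations $U^{11} u_{\bar j} + U^{12} u_{1\bar j} + U^{13} u_{2\bar j} = 0$ for $j = 1,2$ (the $(1,2)$ and $(1,3)$ entries of $U^{-1}U = I$), the expression collapses identically to zero on $M$.

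With $q = 0$ in hand, I would invoke Levi-form compatibility of the Webster connection, which in the present generality (where $h_{1\bar 1}$ need not equal $1$) reads $\theta_1{}^1 + \overline{\theta_1{}^1} = h^{1\bar 1}\,dh_{1\bar 1}$; this extends the third line of (\ref{id10}) and follows from $\nabla h = 0$. Comparing $\theta^1$-coefficients on both sides and using $q = 0$ yields $p = h^{1\bar 1} Z_1 h_{1\bar 1}$, completing the stated formula for $\theta_1{}^1$. The remainder is pure bookkeeping with wedge products; the only nontrivial ingredient is the cofactor-based vanishing of $q$.
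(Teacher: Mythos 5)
Your proposal is correct and follows essentially the same route as the paper: express $dz_1=u_2\theta^1-ic_2\theta$ and $dz_2=-u_1\theta^1+ic_1\theta$, expand $d\theta^1$ in the coframe, match against the structure equation to read off $\tau^1$ and the $\theta$-coefficient of $\theta_1{}^1$, and use $\theta_1{}^1+\theta_{\bar 1}{}^{\bar 1}=h^{1\bar 1}dh_{1\bar 1}$ to fix the $\theta^1$-coefficient. The one point where you go beyond the paper is welcome: the paper merely asserts the identity $u_1Z_{\bar 1}c_2-u_2Z_{\bar 1}c_1=0$ (your $q=0$), whereas your cofactor argument from the rows of $U^{-1}U=I$ actually proves it, and that computation checks out.
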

\begin{proof}
First we point out that all equalities are only true on $M$. Now let
$\theta^{1}=c_{1}dz_{1}+c_{2}dz_{2}$, i.e., $c_{1}=U^{13}$ and
$c_{2}=-U^{12}$. We have that
$1=\theta^{1}(Z_{1})=c_{1}u_{2}-c_{2}u_{1}$. Therefore
\begin{equation}
\begin{split}
u_{2}\theta^{1}&=u_{2} c_{1}dz_{1}+u_{2} c_{2}dz_{2}\\
&=dz_{1}+c_{2}u_{1}dz_{1}+u_{2} c_{2}dz_{2}\\
&=dz_{1}+c_{2}(u_{1}dz_{1}+u_{2}dz_{2})\\
&=dz_{1}+ic_{2}\theta,
\end{split}
\end{equation}
or
\begin{equation}\label{3}
dz_{1}=u_{2}\theta^{1}-ic_{2}\theta,
\end{equation}
hence,
\begin{equation}
\begin{split}
0&=d(dz_{1})=d(u_{2}\theta^{1}-ic_{2}\theta)\\
&=du_{2}\wedge\theta^{1}+u_{2}d\theta^{1}-idc_{2}\wedge\theta-ic_{2}d\theta,
\end{split}
\end{equation}
or
\begin{equation}\label{10}
u_{2}d\theta^{1}=-du_{2}\wedge\theta^{1}+idc_{2}\wedge\theta+ic_{2}d\theta.
\end{equation}
Similarly, we have
\begin{equation}\label{4}
dz_{2}=-u_{1}\theta^{1}+ic_{1}\theta,
\end{equation}
and thus,
\begin{equation}\label{11}
u_{1}d\theta^{1}=-du_{1}\wedge\theta^{1}+idc_{1}\wedge\theta+ic_{1}d\theta.
\end{equation}
Taking together (\ref{10}) and (\ref{11}), one obtains that
\begin{equation}\label{st1}
\begin{split}
d\theta^{1}&=(c_{1}u_{2}-c_{2}u_{1})d\theta^{1}\\
&=\theta^{1}\wedge(c_{1}du_{2}-c_{2}du_{1})+\theta\wedge
i(c_{2}dc_{1}-c_{1}dc_{2})
\end{split}
\end{equation}
On the other hand, we have
\begin{equation}\label{st2}
d\theta^{1}=\theta^{1}\wedge\theta_{1}{}^{1}+\theta\wedge\tau^{1}.
\end{equation}
From (\ref{st1}), (\ref{st2}) and by the Cartan lemma, there exists
functions $a, b$ and $c$ such that
\begin{equation}\label{st3}
\begin{split}
\theta_{1}{}^{1}&=c_{1}du_{2}-c_{2}du_{1}+a\theta^{1}+b\theta\\
\tau^{1}&=i(c_{2}dc_{1}-c_{1}dc_{2})+b\theta^{1}+c\theta.
\end{split}
\end{equation}
Since $\tau^{1}=A^{1}{}_{\bar 1}\theta^{\bar 1}$, from (\ref{st3}),
this means that
\begin{equation}
\begin{split}
A^{1}{}_{\bar 1}&=i(c_{2}Z_{\bar 1}c_{1}-c_{1}Z_{\bar 1}c_{2}),\\
b&=-i(c_{2}Z_{1}c_{1}-c_{1}Z_{1}c_{2}),\\
c&=-i(c_{2}Tc_{1}-c_{1}Tc_{2}),
\end{split}
\end{equation}
hence,
\begin{equation}
\theta_{1}{}^{1}=c_{1}du_{2}-c_{2}du_{1}+a\theta^{1}-i(c_{2}Z_{1}c_{1}-c_{1}Z_{1}c_{2})\theta.
\end{equation}
Finally, from the structural equation $h^{1\bar 1}dh_{1\bar
1}=\theta_{1}{}^{1}+\theta_{\bar 1}{}^{\bar 1}$, we get
\begin{equation}
a=c_{2}Z_{1}u_{1}-c_{1}Z_{1}u_{2}+h^{1\bar 1}Z_{1}h_{1\bar 1},
\end{equation}
hence,
\begin{equation}
\begin{split}
\theta_{1}{}^{1}&=c_{1}du_{2}-c_{2}du_{1}+(c_{2}Z_{1}u_{1}-c_{1}Z_{1}u_{2}+h^{1\bar 1}Z_{1}h_{1\bar 1})\theta^{1}-i(c_{2}Z_{1}c_{1}-c_{1}Z_{1}c_{2})\theta\\
&=(h^{1\bar 1}Z_{1}h_{1\bar
1})\theta^{1}+((u_{1}Tc_{2}-u_{2}Tc_{1})+i(c_{1}Z_{1}c_{2}-c_{2}Z_{1}c_{1}))\theta,
\end{split}
\end{equation}
where in the last equality, we used the identities $u_{1}Z_{\bar
1}c_{2}-u_{2}Z_{\bar 1}c_{1}=0$ and $c_{1}u_{2}-c_{2}u_{1}=1$.
\end{proof}

\begin{prop}
The Webster curvature can be expressed as
\begin{equation}\label{exofWebster}
R=-h^{1\bar 1}(Z_{\bar 1}Z_{1}\log{h_{1\bar
1}})+i\Big((u_{1}Tc_{2}-u_{2}Tc_{1})+i(c_{1}Z_{1}c_{2}-c_{2}Z_{1}c_{1})\Big).
\end{equation}
\end{prop}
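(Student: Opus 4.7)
The plan is to extract $R$ from the structural equation (\ref{strueq}) by computing $d\theta_{1}{}^{1}$ directly from the formula for the connection form given in (\ref{connetor}), and then matching the coefficient of $\theta^{1}\wedge\theta^{\bar 1}$. Concretely, write
\begin{equation*}
\theta_{1}{}^{1}=P\,\theta^{1}+Q\,\theta,
\end{equation*}
where $P=h^{1\bar 1}Z_{1}h_{1\bar 1}=Z_{1}\log h_{1\bar 1}$ and $Q=(u_{1}Tc_{2}-u_{2}Tc_{1})+i(c_{1}Z_{1}c_{2}-c_{2}Z_{1}c_{1})$, both of which are real/complex-valued functions on $M$.

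Next I would expand $d\theta_{1}{}^{1}=dP\wedge\theta^{1}+P\,d\theta^{1}+dQ\wedge\theta+Q\,d\theta$ in the coframe $\{\theta^{1},\theta^{\bar 1},\theta\}$. Writing $dP=(Z_{1}P)\theta^{1}+(Z_{\bar 1}P)\theta^{\bar 1}+(TP)\theta$ and similarly for $dQ$, and substituting $d\theta=ih_{1\bar 1}\theta^{1}\wedge\theta^{\bar 1}$ together with $d\theta^{1}=\theta^{1}\wedge\theta_{1}{}^{1}+\theta\wedge\tau^{1}=Q\,\theta^{1}\wedge\theta+A^{1}{}_{\bar 1}\,\theta\wedge\theta^{\bar 1}$, one collects the coefficient of $\theta^{1}\wedge\theta^{\bar 1}$ and obtains
\begin{equation*}
\bigl(d\theta_{1}{}^{1}\bigr)\big|_{\theta^{1}\wedge\theta^{\bar 1}}=-(Z_{\bar 1}P)+iQh_{1\bar 1}.
\end{equation*}
The coefficients of $\theta^{1}\wedge\theta$ and $\theta^{\bar 1}\wedge\theta$ contribute the covariant derivatives of the torsion appearing in (\ref{strueq}) and may be ignored for the purpose of identifying $R$.

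Comparing this with the right-hand side of (\ref{strueq}), we read off
\begin{equation*}
Rh_{1\bar 1}=-Z_{\bar 1}P+iQh_{1\bar 1},
\end{equation*}
and dividing by $h_{1\bar 1}$ together with the substitution $P=Z_{1}\log h_{1\bar 1}$ yields the stated formula (\ref{exofWebster}). The computation is essentially routine bookkeeping once the key identification $P=Z_{1}\log h_{1\bar 1}$ is in hand; the main place where one must be careful is tracking signs when reordering the basis two-forms and when applying $d\theta^{1}$, since the term $P\,d\theta^{1}$ produces a cancellation of a $PQ\,\theta^{1}\wedge\theta$ term with a corresponding contribution from $dP\wedge\theta^{1}$, and one must verify that no extra $\theta^{1}\wedge\theta^{\bar 1}$ contribution is hidden in $P\,d\theta^{1}$. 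Once this is checked, the identity follows immediately.
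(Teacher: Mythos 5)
Your proposal is correct and follows essentially the same route as the paper: differentiate the expression (\ref{connetor}) for $\theta_{1}{}^{1}$, work modulo $\theta$, note that $P\,d\theta^{1}=\theta^{1}\wedge\theta_{1}{}^{1}+\theta\wedge\tau^{1}$ contributes no $\theta^{1}\wedge\theta^{\bar 1}$ term because $\theta_{1}{}^{1}(Z_{\bar 1})=0$, and compare the coefficient $-Z_{\bar 1}Z_{1}\log h_{1\bar 1}+ih_{1\bar 1}E$ with the structure equation (\ref{strueq}). The bookkeeping of signs and the key identification $P=Z_{1}\log h_{1\bar 1}$ match the paper's computation exactly.
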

\begin{proof}
Let
$E=(u_{1}Tc_{2}-u_{2}Tc_{1})+i(c_{1}Z_{1}c_{2}-c_{2}Z_{1}c_{1})$.
Taking the exterior differential of $\theta_{1}{}^{1}$
\begin{equation}\label{dofconn}
\begin{split}
d\theta_{1}{}^{1}&=d(Z_{1}\log{h_{1\bar 1}})\wedge\theta^{1}+(Z_{1}\log{h_{1\bar 1}})d\theta^{1}+dE\wedge\theta+Ed\theta\\
&=(-Z_{\bar 1}Z_{1}\log{h_{1\bar 1}}+\theta_{1}{}^{1}(Z_{\bar 1})(Z_{1}\log{h_{1\bar 1}})+ih_{1\bar 1}E)\theta^{1}\wedge\theta^{\bar 1},\ \textrm{mod}\ \theta\\
&=(-Z_{\bar 1}Z_{1}\log{h_{1\bar 1}}+ih_{1\bar
1}E)\theta^{1}\wedge\theta^{\bar 1},\ \textrm{mod}\ \theta.
\end{split}
\end{equation}
Comparing (\ref{dofconn}) with the structure equation
$d\theta_{1}{}^{1}=h_{1\bar 1}R\theta^{1}\wedge\theta^{\bar 1},\
\textrm{mod}\ \theta$, we immediately get formula
(\ref{exofWebster}) for the Webster curvature.
\end{proof}
Finally combining (\ref{connetor}) and (\ref{exofWebster}), we get
another representation for the connection form
\begin{equation}\label{conn}
\theta_{1}{}^{1}=(Z_{1}\log{h_{1\bar
1}})\theta^{1}-i\left(R+h^{1\bar 1}(Z_{\bar 1}Z_{1}\log{h_{1\bar
1}})\right)\theta.
\end{equation}
\begin{rem}
There is another expression for the Webster curvature, which was
proved by S.-Y. Li  and H.-S. Luk in \cite{LL}. It is
\begin{equation}\label{LLweb}
R=-h^{1\bar
1}\sum_{j,k=1}^{2}\frac{\partial^{2}\log{(-J(u))}}{\partial
z_{j}\partial z_{\bar k}}w^{j}w^{\bar k}+2\frac{\det{H(u)}}{h_{1\bar
1}}.
\end{equation}
\end{rem}

Next an ellipsoid is given by
\begin{equation}
A_{1}x_{1}{}^{2}+B_{1}y_{1}{}^{2}+A_{2}x_{2}{}^{2}+B_{2}y_{2}{}^{2}-1=0,
\end{equation}
where $A_{1}, A_{2}, B_{1}, B_{2}>0$. Set $z_{1}=x_{1}+iy_{1}$ and $z_{2}=x_{2}+iy_{2}$. Then our defining function becomes
\begin{equation}
u=b_{1}|z_{1}|^{2}+b_{2}|z_{2}|^{2}+a_{1}z_{1}{}^{2}+a_{1}\bar{z}_{1}{}^{2}+a_{2}z_{2}{}^{2}+a_{2}\bar{z}_{2}{}^{2}-1=0,
\end{equation}
where $a_{j}=\frac{1}{4}(A_{j}-B_{j}),\
b_{j}=\frac{1}{2}(A_{j}+B_{j})>0,\ j=1,2.$ We want to make use of
the formula for Webster curvature (\ref{exofWebster})

$$R=-h^{1\bar 1}Z_{\bar 1}Z_{1}(\log{h_{1\bar
1}})+i\big[(u_{1}Tc_{2}-u_{2}Tc_{1})+i(c_{1}Z_{1}c_{2}-c_{2}Z_{1}c_{1})\big],$$
where we recall $Z_{1}=u_{2}\frac{\partial}{\partial
z_{1}}-u_{1}\frac{\partial}{\partial z_{2}}$ and
$\theta=\frac{1}{2i}(\partial u-\bar{\partial}u)$. The functions
$c_{1}, c_{2}$ satisfy the identities:
\begin{equation}
Z_{\bar 1}u_{1}=h_{1\bar 1}c_{1},\ \ \textrm{and}\ \ Z_{\bar
1}u_{2}=h_{1\bar 1}c_{2}.
\end{equation}
So,
\begin{equation}\label{fteq}
\begin{split}
Z_{1}Z_{\bar 1}u_{1}&=Z_{1}(h_{1\bar 1})c_{1}+h_{1\bar 1}Z_{1}c_{1}\\
Z_{1}Z_{\bar 1}u_{2}&=Z_{1}(h_{1\bar 1})c_{2}+h_{1\bar 1}Z_{1}c_{2}.
\end{split}
\end{equation}
Multiplying the first equation in (\ref{fteq}) by $c_{2}$ and the
second by $c_{1}$ and subtracting, we get
\begin{equation}
h_{1\bar 1}(c_{2}Z_{1}c_{1}-c_{1}Z_{1}c_{2})=c_{2}Z_{1}Z_{\bar
1}u_{1}-c_{1}Z_{1}Z_{\bar 1}u_{2}.
\end{equation}
So,
\begin{equation}\label{eqnforZ}
\begin{split}
c_{2}Z_{1}c_{1}-c_{1}Z_{1}c_{2}&=\frac{c_{2}Z_{1}Z_{\bar 1}u_{1}-c_{1}Z_{1}Z_{\bar 1}u_{2}}{h_{1\bar 1}}\\
&=\frac{c_{2}([Z_{1},Z_{\bar 1}]u_{1})-c_{1}([Z_{1},Z_{\bar
1}]u_{2})}{h_{1\bar 1}}+
\frac{c_{2}Z_{\bar 1}Z_{1}u_{1}-c_{1}Z_{\bar 1}Z_{1}u_{2}}{h_{1\bar 1}}\\
&=\frac{-ih_{1\bar 1}(c_{2}Tu_{1}-c_{1}Tu_{2})}+\frac{c_{2}Z_{\bar 1}Z_{1}u_{1}-c_{1}Z_{\bar 1}Z_{1}u_{2}}{h_{1\bar 1}}\\
&=i(c_{1}Tu_{2}-c_{2}Tu_{1})+\frac{c_{2}Z_{\bar
1}Z_{1}u_{1}-c_{1}Z_{\bar 1}Z_{1}u_{2}}{h_{1\bar 1}}.
\end{split}
\end{equation}
Next note $\theta^1(Z_{1})=c_{1}u_{2}-u_{1}c_{2}=1$. Thus,
$T(c_{1}u_{2}-u_{1}c_{2})=0$. So we have,
\begin{equation}\label{eqnforc1}
u_{2}Tc_{1}-u_{1}Tc_{2}=c_{2}Tu_{1}-c_{1}Tu_{2}.
\end{equation}
From (\ref{eqnforZ}) and (\ref{eqnforc1}) we get,
\begin{equation}\label{fteq2}
\begin{split}
&(u_{1}Tc_{2}-u_{2}Tc_{1})+i(c_{1}Z_{1}c_{2}-c_{2}Z_{1}c_{1})\\
&=(c_{1}Tu_{2}-c_{2}Tu_{1})+i(-i)(c_{1}Tu_{2}-c_{2}Tu_{1})-i\left(\frac{c_{2}Z_{\bar 1}Z_{1}u_{1}-c_{1}Z_{\bar 1}Z_{1}u_{2}}{h_{1\bar 1}}\right)\\
&=2(c_{1}Tu_{2}-c_{2}Tu_{1})-i\left(\frac{c_{2}Z_{\bar
1}Z_{1}u_{1}-c_{1}Z_{\bar 1}Z_{1}u_{2}}{h_{1\bar 1}}\right).
\end{split}
\end{equation}
Substituting ($\ref{fteq2}$) into the Webster curvature formula
(\ref{exofWebster}), we get
\begin{equation}\label{exofWeb1}
\begin{split}
R&=-h^{1\bar 1}Z_{\bar 1}Z_{1}(\log{h_{1\bar 1}})+i\big[(u_{1}Tc_{2}-u_{2}Tc_{1})+i(c_{1}Z_{1}c_{2}-c_{2}Z_{1}c_{1})\big]\\
&=-h^{1\bar 1}Z_{\bar 1}Z_{1}(\log{h_{1\bar
1}})+2i(c_{1}Tu_{2}-c_{2}Tu_{1})+\left(\frac{c_{2}Z_{\bar
1}Z_{1}u_{1}-c_{1}Z_{\bar 1}Z_{1}u_{2}}{h_{1\bar 1}}\right).
\end{split}
\end{equation}
We next compute an expression for the Levi form. We have,
\begin{equation}
\begin{split}
J(u)&=\left|\begin{array}{ccc}
u&u_{\bar 1}&u_{\bar 2}\\
u_{1}&u_{1\bar 1}&u_{1\bar 2}\\
u_{2}&u_{\bar{1}2}&u_{2\bar 2}
\end{array}\right|=
\left|\begin{array}{ccc}
u&u_{\bar 1}&u_{\bar 2}\\
u_{1}&b_{1}&0\\
u_{2}&0&b_{2}
\end{array}\right|\\
&=\left|\begin{array}{ccc}
u&2a_{1}\bar{z}_{1}+b_{1}z_{1}&2a_{2}\bar{z}_{2}+b_{2}z_{2}\\
2a_{1}z_{1}+b_{1}\bar{z}_{1}&b_{1}&0\\
2a_{2}z_{2}+b_{2}\bar{z}_{2}&0&b_{2}
\end{array}\right|\\
&=b_{1}b_{2}u-b_{1}u_{2}u_{\bar 2}-b_{2}u_{1}u_{\bar 1}.
\end{split}
\end{equation}\label{exoflevi}
Thus when $u=0$, one has
\begin{equation}
h_{1\bar 1}=b_{1}u_{2}u_{\bar 2}+b_{2}u_{1}u_{\bar 1}.
\end{equation}
Next we compute the first term in the Webster curvature formula (\ref{exofWeb1}):
\begin{equation}\label{fte4}
\begin{split}
-h^{1\bar 1}Z_{\bar 1}Z_{1}(\log{h_{1\bar 1}})&=-h^{1\bar 1}Z_{\bar 1}\left(\frac{Z_{1}h_{1\bar 1}}{h_{1\bar 1}}\right)\\
&=\frac{|Z_{1}h_{1\bar 1}|^2}{(h_{1\bar 1})^3}-\frac{Z_{\bar
1}Z_{1}h_{1\bar 1}}{(h_{1\bar 1})^2},\ \textrm{using}\ h^{1\bar
1}=\frac{1}{h_{1\bar 1}}.
\end{split}
\end{equation}
A straightforward computation using the defining function yields,
\begin{equation}
\begin{split}
Z_{\bar 1}u_{2}&=-b_{2}u_{\bar 1},\ Z_{\bar 1}u_{1}=b_{1}u_{\bar 2}\\
Z_{\bar 1}u_{\bar 1}&=2a_{1}u_{\bar 2},\ Z_{\bar 1}u_{\bar
2}=-2a_{2}u_{\bar 1}.
\end{split}
\end{equation}
So,
\begin{equation}\label{fte5}
\begin{split}
Z_{\bar 1}Z_{1}h_{1\bar 1}&=2a_{1}b_{2}Z_{\bar 1}(u_{2}u_{\bar 1})-2a_{2}b_{1}Z_{\bar 1}(u_{1}u_{\bar 2})\\
&=2a_{1}b_{2}[(Z_{\bar 1}u_{2})u_{\bar 1}+u_{2}(Z_{\bar 1}u_{\bar 1})]-2a_{2}b_{1}[(Z_{\bar 1}u_{1})u_{\bar 2}+u_{1}(Z_{\bar 1}u_{\bar 2})]\\
&=4a_{1}{}^{2}b_{2}|u_{2}|^{2}+4a_{2}{}^{2}b_{1}|u_{1}|^{2}-2a_{1}b_{2}{}^{2}(u_{\bar
1})^{2}-2a_{2}b_{1}{}^{2}(u_{\bar 2})^{2}.
\end{split}
\end{equation}
Next we claim that
\begin{equation}\label{claim1}
2i(c_{1}Tu_{2}-c_{2}Tu_{1})=\frac{2}{h_{1\bar
1}^{2}}\left(b_{1}b_{2}^{2}|u_{1}|^{2}+b_{2}b_{1}^{2}|u_{2}|^{2}-
2a_{1}b_{2}^{2}|u_{\bar 1}|^{2}-2a_{2}b_{1}^{2}|u_{\bar
1}|^{2}\right);
\end{equation}
and
\begin{equation}\label{claim2}
\frac{c_{2}Z_{\bar 1}Z_{1}u_{1}-c_{1}Z_{\bar 1}Z_{1}u_{2}}{h_{1\bar
1}}=\frac{2a_{1}b_{2}^{2}|u_{\bar 1}|^{2}+2a_{2}b_{1}^{2}|u_{\bar
1}|^{2}}{(h_{1\bar 1})^{2}}
\end{equation}
These follow because,
\begin{equation}
Z_{\bar 1}Z_{1}u_{1}=Z_{\bar 1}(2a_{1}u_{2})=-2a_{1}b_{2}u_{\bar 1}.
\end{equation}
So,
\begin{equation}
c_{2}Z_{\bar 1}Z_{1}u_{1}=\frac{-b_{2}u_{\bar 1}}{h_{1\bar
1}}(-2a_{1}b_{2}u_{\bar 1})=\frac{2a_{1}b_{2}^{2}(u_{\bar
1})^{2}}{h_{1\bar 1}}.
\end{equation}
Similarly,
\begin{equation}
c_{1}Z_{\bar 1}Z_{1}u_{2}=\frac{b_{1}u_{\bar 2}}{h_{1\bar
1}}(-2a_{2}b_{1}u_{\bar 2})=\frac{-2a_{2}b_{1}^{2}(u_{\bar
2})^{2}}{h_{1\bar 1}}.
\end{equation}
Taking the two expressions above together, we get the second claim
(\ref{claim2}). To prove the first claim (\ref{claim1}), we use
\begin{equation}
T=\frac{i}{h_{1\bar 1}}\left(b_{2}u_{\bar 1}\frac{\partial}{\partial
z_{1}}+b_{1}u_{\bar 2}\frac{\partial}{\partial z_{2}}\right)+\
\textrm{complex conjugate}.
\end{equation}
So
\begin{equation}
\begin{split}
Tu_{1}&=\frac{2ia_{1}b_{2}u_{\bar 1}}{h_{1\bar 1}}-\frac{ib_{1}b_{2}u_{1}}{h_{1\bar 1}}\\
Tu_{2}&=\frac{2ia_{2}b_{1}u_{\bar 2}}{h_{1\bar
1}}-\frac{ib_{1}b_{2}u_{2}}{h_{1\bar 1}},
\end{split}
\end{equation}
In conjunction with $c_{1}=\frac{b_{1}u_{\bar 2}}{h_{1\bar 1}},\
c_{2}=\frac{-b_{2}u_{\bar 1}}{h_{1\bar 1}}$, we get the first claim
(\ref{claim1}). Now we substitute (\ref{fte5}) into (\ref{fte4}) and
substitute (\ref{fte4}), (\ref{claim1}) and (\ref{claim2}) into
formula (\ref{exofWeb1}), to obtain
\begin{equation}
\begin{split}
R&=\frac{|Z_{1}h_{1\bar 1}|^2}{(h_{1\bar 1})^3}+\frac{2b_{1}(b_{2}^{2}-2a_{2}^{2})|u_{1}|^{2}+2b_{2}(b_{1}^{2}-2a_{1}^{2})|u_{2}|^{2}}{(h_{1\bar 1})^2}\\
&>0,
\end{split}
\end{equation}
The last inequality is a consequence of  $b_{i}^{2}-2a_{i}^{2}>0,\
i=1,2$. This follows because,
\begin{equation}
\begin{split}
b_{i}^{2}&=\frac{1}{4}(A_{i}+B_{i})^{2}\\
2a_{i}^{2}&=\frac{2}{16}(A_{i}-B_{i})^{2},
\end{split}
\end{equation}
hence $b_{i}^{2}-2a_{i}^{2}=\frac{1}{8}(A_{i}^{2}+B_{i}^{2}+6A_{i}B_{i})>0$, (note that $A_{i}, B_{i}>0$).

\section{Further Remarks}

It remains an interesting problem to determine the precise
geometrical condition when the notion of being in the kernel of
$P_4$ coincides with CR-pluriharmonicity for a general CR structure.
One problem of immediate interest is to determine if for embedded
structures, the CR-pluriharmonic functions coincide with the
functions in the kernel of the Paneitz operator. It is unclear if
such an equivalence is true even for CR structures close to the
standard structure on $S^3$. Under the assumption that the CR
pluriharmonic functions coincide with the kernel of the Paneitz
operator, C. R. Graham, K. Hirachi and J. M. Lee proved the theorem
stated below. Thus our question has further geometric consequences
beyond a possible link with embedding of CR structures.
\begin{thm}\label{main5}
Let $\Omega\subset\C^{2}$ be a strictly pseudoconvex domain with a defining function $u$. Suppose $M=\partial\Omega$. Then the
following are equivalent:
\begin{enumerate}
   \item $Q=0.$
   \item $u$ satisfies Fefferman's Monge-Ampere equation $-J(u)\equiv 1$ along $M$
up to multiplication by a CR pluriharmonic function.
   \item $\theta\wedge d\theta$ is the invariant volume
element up to multiplication by a CR pluriharmonic function.
\end{enumerate}
\end{thm}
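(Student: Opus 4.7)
The plan is to prove Theorem \ref{main5} as a three-way equivalence driven by the transformation laws of the three quantities under a rescaling $u\mapsto e^{2f}u$ (equivalently, $\theta\mapsto e^{2f}\theta$ on $M$). Throughout, I interpret the symbol $Q$ in the theorem as the CR $Q$-curvature scalar of Fefferman--Hirachi (distinct from the operator $Q$ defined in Section 1), whose key property in dimension three is the transformation rule $e^{4f}\widetilde{Q}=Q+P_{4}f$ under $\widetilde\theta=e^{2f}\theta$. The hypothesis preceding the theorem, namely $\ker P_{4}=\{\text{CR-pluriharmonic functions}\}$, exactly translates this rule into the "up to multiplication by a CR pluriharmonic function" clauses: the set of contact forms with $Q=0$ is a single orbit under $\theta\mapsto e^{2f}\theta$ with $f$ pluriharmonic.

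I would first dispose of $(2)\Leftrightarrow(3)$. By Proposition \ref{fefferman}, $h_{1\bar 1}=-J(u)$ on $M$, and $\theta\wedge d\theta=ih_{1\bar 1}\,\theta\wedge\theta^{1}\wedge\theta^{\bar 1}$. Consequently, requiring $-J(u)\equiv 1$ on $M$ is equivalent to asking that $\theta\wedge d\theta$ coincide with the canonical coframe volume $i\theta\wedge\theta^{1}\wedge\theta^{\bar 1}$. A short computation tracks how $u\mapsto e^{2f}u$ with $f$ CR pluriharmonic rescales both $-J(u)|_{M}$ and $h_{1\bar 1}$ by the same explicit factor, showing that the pluriharmonic ambiguities in $(2)$ and $(3)$ match under this identification.

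For $(1)\Leftrightarrow(3)$ I would appeal to Fefferman's ambient Lorentz metric construction and the associated invariant volume element on $M$ (the pushforward of the natural volume on the Fefferman circle bundle). Hirachi's characterization singles out this invariant volume among all pseudohermitian volumes $\theta\wedge d\theta$: it is precisely those for which $Q(\theta)=0$. Combined with the transformation formula and the kernel hypothesis, this yields that $\theta\wedge d\theta$ equals the invariant volume up to a pluriharmonic factor iff $Q(\theta)=0$, which gives the remaining equivalence.

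The main obstacle I expect is making this last step fully rigorous -- namely, that vanishing of $Q$ is necessary (not merely sufficient) for $\theta\wedge d\theta$ to be the Fefferman invariant volume, together with the precise matching of the pluriharmonic ambiguity. This relies on the Fefferman--Graham ambient metric machinery and Hirachi's $Q$-curvature analysis in CR dimension one, and it uses the kernel assumption in an essential way: without $\ker P_{4}=\{\text{pluriharmonic}\}$, the ambiguity in the transformation formula would be the full kernel of $P_{4}$, which could strictly contain the pluriharmonic class and thereby break the clean "up to pluriharmonic" description. Once these ingredients are in place, the rest is a bookkeeping exercise in transformation laws, with Proposition \ref{fefferman} serving as the bridge between the analytic condition on $J(u)$ and the geometric condition on $\theta\wedge d\theta$.
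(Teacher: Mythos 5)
There is nothing in the paper to compare your proposal against: Theorem \ref{main5} is not proved in this paper at all. It appears in the closing section of remarks and is explicitly attributed to C.~R.~Graham, K.~Hirachi and J.~M.~Lee; it is stated under the standing assumption (made in the sentence preceding the theorem, not inside the theorem environment) that the CR pluriharmonic functions coincide with $\ker P_4$, and is quoted only to motivate the authors' open question about when that coincidence holds.

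Judging your outline on its own merits: your reading of the symbol $Q$ as Hirachi's $Q$-curvature scalar, with transformation law of the form $e^{4f}\widetilde{Q}=Q+c\,P_{4}f$ under $\widetilde{\theta}=e^{2f}\theta$, rather than as the operator $Q\varphi=2i(A^{11}\varphi_{1})_{,1}$ of Section 1, is the right one --- the operator interpretation would make $(1)$ a torsion condition, which cannot match the volume-normalization conditions $(2)$ and $(3)$. You also correctly identify that the hypothesis $\ker P_{4}=\{\text{CR pluriharmonic functions}\}$ is exactly what converts the $P_{4}f$ ambiguity in the transformation law into the ``up to multiplication by a CR pluriharmonic function'' clauses, and Proposition \ref{fefferman} ($h_{1\bar 1}=-J(u)$ on $M$) is the correct bridge for $(2)\Leftrightarrow(3)$. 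The gap is that the substantive implication --- that $Q(\theta)=0$ if and only if $\theta$ comes from a Monge--Amp\`ere-normalized defining function (equivalently, is the Fefferman invariant volume normalization) --- is precisely the content of the Fefferman--Hirachi/Graham--Hirachi--Lee result you invoke; your argument does not establish it but defers it wholesale to the ambient-metric literature. So what you have is a correct reduction of the theorem to known results rather than a proof. Since the paper itself treats the theorem the same way (as a quotation without proof), this is not a defect relative to the paper, but it should not be presented as a self-contained argument.
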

We recall that $J(u)$ is Fefferman's Monge-Ampere equation, which is
defined by

\begin{equation*}
J(u)=\det\left(\begin{array}{ccc}
{u}_{1\bar 1}&{u}_{1\bar 2}&{u}_{1}\\
{u}_{2\bar 1}&{u}_{2\bar 2}&{u}_{2}\\
{u}_{\bar 1}&{u}_{\bar 2}&{u}
\end{array}
 \right)
\end{equation*}

In section \ref{Webcur}, we showed that the Webster curvature for
ellipsoids are positive. It is interesting to know if the Webster
curvature is also positive for a strictly convex domain? If so, then
from our earlier result in \cite{CCY}, there is an uniform positive
lower bound for the first nonzero eigenvalues $\lambda_{t}$ of the
Kohn Laplacian $\Box_{b}^{t}$ for the family of strictly convex
domains $\Omega_{t}$, which is smoothly dependent on $t$.


\begin{thebibliography}{99}

\bibitem{AS}
Andreotti, A. and Siu, Y.-T.: Projective Embedding of Pseudoconcave
Spaces, {\it Annali della Scuola Normale Sup. di Pisa} {\bf 24}
(1970), 231--278;
\bibitem{B}
Bland, J. S.: Contact geometry and CR structures on $S^{3}$, {\it
Acta Math.} {\bf 172} (1994), 1--49;

\bibitem{BC}
Burns, D. M. and Epstein, C. L.: Embeddability for Three-Dimensional
CR-Manifolds, {\it J. of the Amer. Math. Soc.} {\bf 4} (1990),
809--840;

\bibitem{C}
Chiu, H-L. : The sharp lower bound for the first positive eigenvalue
of the sublaplacian on a pseudohermitian 3-manifold, {\it Ann. Glob.
Anal. Geom}{\bf 30} (2006), 81--96;

\bibitem{CC}
Cao, J. and Chang, S.-C : Pseudo-Einstein and $Q$-flat Metrics with
Eigenvalue Estimates on CR-hypersurfaces, {\it Indiana Univ. Math.
J.}{\bf 56} (2007), 2839--2857;


\bibitem{CCY}
Chanillo, S., Chiu, H-L. and Yang, P. : Embeddability for
three-dimensional Cauchy-Riemann manifolds and CR Yamabe invariants,
to appear in Duke Math. J.

\bibitem{CS}
Chen, S.-C. and Shaw, M.-C.  : Partial differential equations in
several complex variables, {\it Studies in Advanced Mathematics,}
S.-T. Yau, Series Editor, AMS/International Press, 2000;


\bibitem{E}
Epstein, Charles L. :
CR-structures on three dimensional circle bundles.
{\it Invent. Math.}{\bf 109} (1992), 351--403;

\bibitem{CE}
Epstein, Charles L. : Personal Communication;



\bibitem{G}
Gray, J. W. :
Some global properties of contact structures.
{\it Ann. Math.}{\bf 69} (1959), 421--450;

\bibitem{GL} Graham, C. R. and Lee, J. M., Smooth solutions of degenerate
Laplacians on strictly pseudoconvex domains, \textit{Duke Math. J.}, \textbf{%
57} (1988), 697-720.


\bibitem{H}
Hirachi, K. :
Scalar pseudo-hermitian invariants and the Szego kernel on three-dimensional CR manifolds.
{\it Lect. notes in Pure and Appl. Math.}{\bf 143} (1993), 67--76;


\bibitem{Ko}
Kohn, J. J. : The range of the tangential Cauchy-Riemann operator,
{\it Duke. Math. J.}{\bf 53} (1986), 525--545;


\bibitem{Le}
Lee, J. M. : Pseudo-Einstein structures on CR Manifolds, {\it Am. J.
Math.}{\bf 110} (1988), 157--178;


\bibitem{Lem}
Lempert, L. : Embedding of three dimensional Cauch-Riemann
manifolds. {\it Math. Ann.}{\bf 300} (1994), 1--15;

\bibitem{LL}
Li, S. Y. and Luk, H. S. : An explicit formula for the Webster pseudo-Ricci curvature on real
hypersurfaces and its application for characterizing balls in $\C^{n}$, {\it Comm. Anal.
Geom.}{\bf 14} (2006), 673--701;


\bibitem{R}
Rossi, H.: Attaching analytic spaces to an analytic space along a
pseudoconvex boundary, {\it Proc. Conf. Complex Manifold},
Springer-Verlag, New York, (1965), 242--256;

\bibitem{W}
Webster, S.M. :Pseudo-Hermitian structures on a real hypersurface,
{\it J. Diff. Geom.}{\bf 13}(1978), 25-41;



\end{thebibliography}
\bibliographystyle{plain}

\end{document}